\newtheorem{theorem}{Theorem}[section]
\newtheorem{proposition}[theorem]{Proposition}
\newtheorem{corollary}[theorem]{Corollary}
\newtheorem{lemma}[theorem]{Lemma}
\newtheorem*{theorem*}{Theorem}
\newtheorem*{conjecture*}{Conjecture}
\newtheorem*{corollary*}{Corollary}
\newtheorem*{proposition*}{Proposition}
\newtheorem{claim}{Claim}
\newtheorem{problem}[theorem]{Problem}
\theoremstyle{remark}
\begin{document}
\date{}
\title{Brooks' theorem with forbidden colors}

\author{
Carl Johan Casselgren\footnote{Department of Mathematics, 
Link\"oping University, 
SE-581 83 Link\"oping, Sweden.
{\it E-mail address:} carl.johan.casselgren@liu.se.
Research supported by a grant from the Swedish Research council VR
(2017-05077).
}

}

\date{\today}
\maketitle

\bigskip
\noindent
\begin{abstract}
We consider extensions of Brooks' classic theorem on vertex coloring
where some colors cannot be used on certain vertices.
In particular we prove that if $G$ is a connected
graph with maximum degree $\Delta(G) \geq 4$ that is not a complete graph
and $P \subseteq V(G)$ is a set of vertices
where either

(i) at most $\Delta(G)-2$ colors are forbidden for every vertex in $P$,
	and any two vertices of $P$
	are at distance at least $4$, or
	
	(ii) at most $\Delta(G)-3$ colors are forbidden for every vertex in $P$, 
	and any two vertices of $P$
	are at distance at least $3$,

\noindent
then there is a proper $\Delta(G)$-coloring of $G$ respecting these constraints.
In fact, we shall prove that these results hold in the more general setting
of list colorings. These results are sharp.

\end{abstract}

\noindent
\small{\emph{Keywords: Graph coloring, List coloring, Brooks' theorem}}

\section{Introduction}

Brooks' classic theorem on graph coloring states that if $G$ 
is a connected graph, not isomorphic to
a complete graph or an odd cycle, then $G$ is properly $\Delta(G)$-colorable,
where $\Delta(G)$ as usual denotes the maximum degree of $G$.
This theorem has been strengthened in many different ways, see e.g.~the recent survey
\cite{CranstonRabern}. In particular, a list coloring version of Brooks' theorem
was obtained already in the seminal paper on list coloring
by Erd\H os et al \cite{ERT}, and also
independently by Vizing \cite{Vizing}.

A {\em precoloring} (or {\em partial coloring}) of a graph $G$ is a proper
coloring of some subset $V' \subseteq V(G)$. Given a precoloring $\varphi$ of $G$, 
we are usually interested in finding an {\em extension} of $\varphi$, that is, a proper coloring
of $G$ that agrees with the partial coloring $\varphi$. If there is such a coloring
(using the same number of colors as $\varphi$), then $\varphi$ is {\em extendable}.

A precoloring extension version of Brooks' theorem was obtained
by Albertson et al \cite{AlbertsonKostochkaWest}
and, independently, by Axenovich \cite{Axenovich}: if $P$ is an independent set, 
$\Delta(G) \geq 3$
and the minimum distance between any two vertices in $P$ is $8$, then any
proper coloring of $P$ can be extended to a proper $\Delta(G)$-coloring of $G$,
unless $G$ contains a copy of $K_{\Delta(G)+1}$, where $K_n$ as usual
denotes a complete graph on $n$ vertices.
In general, the condition on the distance is tight, but Voigt 
obtained improvements for the case when $G$ is $2$-connected \cite{Voigt1, Voigt2}.

In this short note we consider the similar problem of constructing proper $\Delta(G)$-colorings
{\em avoiding} certain colors; that is, given a subset $P \subseteq V(G)$ of the vertices
of a graph $G$ where every vertex of $P$ is assigned a set of 
forbidden colors from $\{1,\dots, \Delta(G)\}$, 
we are interested in finding a proper $\Delta(G)$-coloring
of $G$ respecting these constraints.
These type of questions go back to a paper by H\"aggkvist \cite{Haggkvist}
and although they arise naturally in problems where
a coloring is constructed sequentially,
it seems that they
so far primarily have been studied in the setting of edge colorings, 
see e.g.~\cite{CasselgrenPham, Casselgren, CasselgrenJohanssonMarkstrom,EGHKPS} 
and references therein.
In particular, a variant of Vizing's edge coloring theorem with forbidden colors was obtained in
\cite{EGHKPS}.
Note that it is not possible to prove a variant of K\"onig's edge coloring theorem 
with forbidden colors (even if only one forbidden color is assigned
to edges in a matching),
as the coloring of the graph in Figure \ref{fig:edge}
shows; here $K^{(i)}_{n,n}-e$ denotes a copy of the complete bipartite graph
$K_{n,n}$ where an arbitrary edge
has been removed (which is dashed in the figure). 
Since color $1$ cannot be used on any of the edges at the top
of the figure, it cannot be used on any edge incident with the bottom vertex $u$
either. Hence, there is no proper $\Delta(G)$-edge coloring respecting the forbidden
colors.

Note that here we can make the distance between the colored
edges arbitrarily large by adding more copies of $K_{n,n}-e$ along with
connecting edges between different copies of $K_{n,n}-e$.
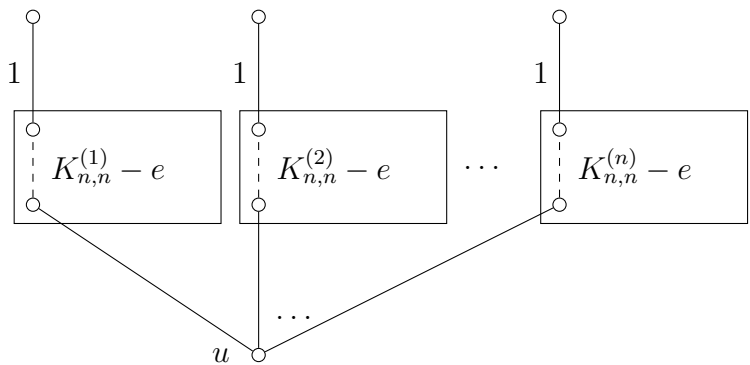
\begin{figure} [h]
	\begin{center}

	\begin{tikzpicture}[scale=0.5]
	\tikzset{vertex/.style = {shape=circle,draw,
												inner sep=0pt, minimum width=5pt}}
	\tikzset{edge/.style = {-,> = latex'}}
	
	\node[vertex] (u) at  (0, -2) {};
	\node at (-1,-2) {$u$};
	\node[vertex] (x_1) at  (-6, 2) {};
	\node[vertex] (x_2) at  (-6, 4) {};
	\node[vertex] (x_3) at  (-6, 7) {};
	\node[vertex] (y_1) at  (0, 2) {};
	\node[vertex] (y_2) at  (0, 4) {};
	\node[vertex] (y_3) at  (0, 7) {};
	\node[vertex] (z_1) at  (8, 2) {};
	\node[vertex] (z_2) at  (8, 4) {};
		\node[vertex] (z_3) at  (8, 7) {};

	\draw[dashed] (x_1) to node [] {}  (x_2);
	\draw[edge] (x_2) to node [midway,left] {$1$} (x_3);
	\draw[edge] (y_2) to node [midway,left] {$1$} (y_3);
	\draw[dashed] (y_1) to node [] {}  (y_2);
	\draw[edge] (z_2) to node [midway,left] {$1$} (z_3);
	\draw[dashed] (z_1) to node [] {}  (z_2);
	\draw[edge] (u) to node [] {}  (x_1);
		\draw[edge] (u) to node [] {}  (y_1);
	\draw[edge] (u) to node [] {}  (z_1);
	
\draw [draw=black] (-6.5,4.5) rectangle (-1,1.5);
\node at (-4,3) {$K^{(1)}_{n,n}-e$};
\draw [draw=black] (-0.5,4.5) rectangle (5,1.5);
\node at (2,3) {$K^{(2)}_{n,n}-e$};
\draw [draw=black] (7.5,4.5) rectangle (13,1.5);
\node at (10,3) {$K^{(n)}_{n,n}-e$};

\node at (6,3) {$\dots$};
\node at (1,-1) {$\dots$};

\end{tikzpicture}
\end{center}
\caption{A bipartite graph $G$ with no proper $\Delta(G)$-edge coloring
respecting the forbidden colors.}
	\label{fig:edge}
\end{figure}

Let us note that the general problem of coloring 
a given graph properly subject to the condition that some
colors cannot be used on certain vertices is certainly NP-complete
since the general vertex (list) coloring problem is.

Here we prove that if $G$ is a graph with $\Delta(G) \geq 4$, 
not containing a copy of $K_{\Delta(G)+1}$,
and $P \subseteq V(G)$ an independent set
where
\begin{itemize}
	
	\item[(i)] at most $\Delta(G)-2$ colors are forbidden for every vertex in $P$,
	and any two vertices of $P$
	are at distance at least $4$, or
	
	\item[(ii)] at most $\Delta(G)-3$ colors are forbidden
	for every vertex in $P$,
	and any two vertices of $P$
	are at distance at least $3$,
	\end{itemize}
then there is a proper $\Delta(G)$-coloring of $G$ respecting these constraints.
It remains an open problem whether (i) holds in 
the case when $\Delta(G) = 3$.

Comparing this to the aforementioned
results on precoloring extension \cite{Albertson,AlbertsonKostochkaWest},
we see that allowing
just the slight extra flexibility of having one or two extra colors on the vertices
in $P$ implies that the distance condition
can be relaxed significantly.

Moreover, as  for the results on precoloring extension in
\cite{Albertson,AlbertsonKostochkaWest}, we shall
prove that these results hold in the more general setting of list coloring of graphs.
This is proved in the next section, where we also give examples
showing that they are sharp.
In Section 3 we briefly consider the related question of {\em avoiding} a given partial 
coloring $\varphi$, i.e.~the problem of finding a coloring $f$ of a graph which differs 
from $\varphi$ on every vertex that is colored under $\varphi$, and
give some observations and remarks on this problem.

\section{Main Result}

	Before proving our main result, let us introduce some terminology.
		The {\em block-cutpoint graph} of a graph $G$ has a vertex for each
	block of $G$ and a vertex for each cut-vertex of $G$, where a cut-vertex
	$v$ is adjacent to a block $B$ if $v \in V(B)$.
	A {\em leaf block} in a graph $G$ is a block of $G$ that 
	contains at most one cut-vertex. 
	We shall use standard graph theory notation; in particular,
	$d_{G}(v)$ denotes the degree of a vertex $v$ in $G$ and an induced subgraph of $G$
	is denoted by $G[S]$, where $S$ is a set of vertices or a set of edges.
	
	Given a graph $G$, assign to each vertex $v$ of $G$ a set
	$L(v)$ of colors (positive integers).
	Such an assignment $L$ is called a \emph{list assignment} for $G$ and
	the sets $L(v)$ are referred
	to as \emph{lists} or \emph{color lists}.
	If all lists have equal size $k$, then $L$
	is called a \emph{$k$-list assignment}.
	Usually, one is interested in finding a proper
	vertex coloring $\varphi$ of $G$,
	such that $\varphi(v) \in L(v)$ for all
	$v \in V(G)$. If such a coloring $\varphi$ exists then
	$G$ is \emph{$L$-colorable} and $\varphi$
	is called an \emph{$L$-coloring}; if $L$ is clear from the context, 
	then we
	just call $\varphi$ a {\em list coloring} and say that $G$ is 
	{\em list colorable} when the coloring exists.
	Furthermore, $G$ is
	called \emph{$k$-choosable} if it is $L$-colorable
	for every $k$-list assignment $L$.
	The least number $k$ such that $G$ is $k$-choosable is called
	the \emph{list-chromatic number} (or {\em choice number}) of $G$ and is denoted by
	$\chi_l(G)$.

As mentioned above, Brooks' theorem holds in the setting of
list coloring; that is, if $G$ is a connected graph which is not isomorphic
to an odd cycle or a complete graph, then $\chi_l(G) \leq \Delta(G)$.
As in the aforementioned papers on precoloring extension,
we shall use the following stronger theorem 
proved in \cite{Borodin, ERT}. A {\em Gallai tree} is a connected
graph in which every block is a complete graph or an odd cycle.
A graph $G$ is {\em degree-choosable} if it has an $L$-coloring
whenever $L$ is a list assignment such that $|L(v)| \geq d_G(v)$
for all $v \in V(G)$; such a list assignment of $G$
is called {\em supervalent}.

\begin{theorem}
\label{th:charac}
	If $L$ is a supervalent list assignment for a connected graph
	$G$ and there is no $L$-coloring of $G$, then
	
	(a) $|L(v)| = d_G(v)$ for every $v \in V(G)$;
	
	(b) $G$ is a Gallai tree;
	
	(c) $L(v) = \cup_{B\in \mathcal{B}(v)} L_B$ for all
	$v \in V(G)$, where $\mathcal{B}(v)$ is the set of blocks containing
	$v$, and for each block $B$, $L_B$ is a set of $\chi(B) -1$ colors.
\end{theorem}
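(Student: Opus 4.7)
The plan is to establish (a), (b), (c) in order, combining a greedy coloring argument for (a) with induction on $|V(G)|$ for (b) and (c).

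For (a), I would use a reverse breadth-first search argument. Suppose toward a contradiction that some vertex $v_0$ satisfies $|L(v_0)| > d_G(v_0)$. Take a BFS spanning tree of $G$ rooted at $v_0$ and color the vertices greedily in order of non-increasing distance from $v_0$. Every non-root vertex $v$ has its parent in the BFS tree still uncolored at the moment $v$ is colored, so at most $d_G(v)-1 < |L(v)|$ of its neighbors have been colored and some color in $L(v)$ is available. When $v_0$ is colored last, all its neighbors are colored, but the strict inequality $|L(v_0)| > d_G(v_0)$ leaves a free color. This yields an $L$-coloring, contradicting the hypothesis, so $|L(v)| = d_G(v)$ for every $v$.

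For (b) and (c), assuming (a), I would proceed by induction on $|V(G)|$. If $G$ is $2$-connected, the list version of Brooks' theorem (mentioned in the introduction) implies that $G$ is degree-choosable unless $G$ is a complete graph or an odd cycle; for those two exceptional cases a direct analysis shows that the only supervalent list assignments admitting no $L$-coloring are the ones in which every list equals a common set of $\chi(G)-1$ colors, which confirms (b) and (c) in this base case.

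When $G$ has a cut-vertex, I would single out a leaf block $B$ of $G$ with cut-vertex $c$ and consider $G' = G - (V(B)\setminus\{c\})$. For each $\alpha \in L(c)$, I would try to find an $L$-coloring of $G'$ with $c$ receiving $\alpha$ (invoking the induction hypothesis applied to the smaller graph, or on each component of $G'-c$), and then attempt to extend to $B$ under the induced list assignment, which gives $c$ the singleton $\{\alpha\}$ and every other vertex of $B$ its original list of size $d_B(v)$. After removing $\alpha$ from the lists of neighbors of $c$ in $B$ and deleting $c$, the induction hypothesis applied to $B-c$ forces $B$ to be a clique or odd cycle and the lists of $B$ to have the prescribed block-union form. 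Running this analysis across all leaf blocks and iterating up the block-cutpoint tree then forces every block of $G$ to be a clique or odd cycle and pins down the lists to the form required by (c).

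The main obstacle will be the bookkeeping at cut-vertices: when the extension fails for \emph{every} choice $\alpha \in L(c)$ and for \emph{every} $L$-coloring of $G'$, one must argue that $L(c)$ is exactly $\bigcup_{B' \in \mathcal{B}(c)} L_{B'}$ and that the $L_{B'}$ glue together consistently across all cut-vertices simultaneously. Showing that any deviation from this prescribed structure at a single vertex produces an extendable color choice somewhere in $G$—so that a global counterexample is forced to exhibit the rigid union-of-blocks list structure everywhere at once—is the delicate step that links the local analysis of individual blocks to the global conclusions (b) and (c).
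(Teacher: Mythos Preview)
The paper does not contain a proof of this theorem at all: it is stated as a known result and attributed to Borodin~\cite{Borodin} and Erd\H{o}s--Rubin--Taylor~\cite{ERT}. There is therefore no ``paper's own proof'' to compare your proposal against.

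That said, a brief comment on your sketch. The argument for (a) is correct and standard. In the $2$-connected base case, however, you invoke ``the list version of Brooks' theorem (mentioned in the introduction)'' to conclude degree-choosability; but that theorem concerns $\Delta(G)$-list assignments, not arbitrary supervalent ones, and so does not directly handle a $2$-connected non-regular graph with $|L(v)|=d_G(v)$. What you actually need here is precisely the $2$-connected case of the theorem you are proving, so you must supply an independent argument (the usual route is Rubin's lemma: a $2$-connected graph that is neither a clique nor an odd cycle contains an induced even cycle with at most one chord, from which a greedy ordering yields an $L$-coloring). Your inductive step at a cut-vertex is along the right lines, and you correctly identify the real work: showing that failure for \emph{every} colorable choice at the cut-vertex forces the rigid $L_B$-structure simultaneously across all blocks. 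That step is only gestured at, so the proposal is a plausible outline rather than a complete proof.
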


Note that this implies (as remarked in \cite{AlbertsonKostochkaWest})
that each block $B$ is an $|L_B|$-regular graph, and
that all vertices of a single block that are not cut-vertices of $G$
have the same list. Furthermore, we shall need the following.

\begin{corollary}
\label{cor:leaf}
	Suppose that $T$ is a Gallai tree, $L$ a supervalent list assignment
	satisfying conditions (a), (b), and (c) of Theorem \ref{th:charac},
	and that there is a leaf block $B_L$ of $T$ with $\Delta(T) \geq 3$
	vertices $u_1,\dots, u_{\Delta(T)-1}, v$, where $v$ is a cut-vertex
	of $T$ of degree $\Delta(T)$.
	Assume further that $u_0$ is a vertex of degree $\Delta(T)-1$,
	not contained in $B_L$,
	and that there are colors $c_1, c_2,c_3$ such that
	\begin{itemize}
		\item $c_1 \notin L(u_i)$, for $i=0,\dots, \Delta(T)-1$, and
		
		\item $c_2,c_3 \in L(u_i)$, for $i=0,\dots, \Delta(T)-1$.
	\end{itemize}
	Set $U =\{u_0, u_1, \dots, u_{\Delta(T)-1}\}$ and
	define the list assignments $L'$ and $L''$ for $T$ by setting
	\begin{itemize}
		
		\item $L''(v) = L'(v) = L(v)$ if $v \in V(T) \setminus U$, and
	
		\item $L'(v) = L(v)  \cup \{c_1\} \setminus \{c_2\}$
		and $L''(v) = L(v) \cup \{c_1\} \setminus \{c_3\} $ if $v \in U$.
		
	\end{itemize}
	If $T$ is not $L'$-colorable, then it is $L''$-colorable.
\end{corollary}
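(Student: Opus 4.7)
The plan is to argue by contradiction: suppose $T$ is neither $L'$-colorable nor $L''$-colorable. Both $L'$ and $L''$ are supervalent, since $|L'(w)|=|L''(w)|=|L(w)|=d_T(w)$ for every $w\in V(T)$, so Theorem~\ref{th:charac} applies to each and produces families $\{L'_B\}$ and $\{L''_B\}$ of block-color sets of sizes $\chi(B)-1$ satisfying
\begin{equation*}
L'(w)=\bigsqcup_{B\in\mathcal{B}(w)} L'_B, \qquad L''(w)=\bigsqcup_{B\in\mathcal{B}(w)} L''_B,
\end{equation*}
for every $w\in V(T)$; the unions are disjoint because $|L(w)|=d_T(w)=\sum_{B\ni w}(\chi(B)-1)$.

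From the non-cut vertices $u_1,\ldots,u_{\Delta(T)-1}$ of $B_L$ one reads off $L'_{B_L}=L_{B_L}\cup\{c_1\}\setminus\{c_2\}$ and $L''_{B_L}=L_{B_L}\cup\{c_1\}\setminus\{c_3\}$, and every block $B\neq B_L$ with a non-cut vertex $w\notin U$ satisfies $L'_B=L(w)=L_B=L''_B$. Focusing on the cut-vertex $v$, where the three partitions of $L(v)$ agree as sets, the color $c_2$ (which sits uniquely in $L_{B_L}$ in the $L$-partition) must appear in some $L'_{B^{(2)}}$ with $B^{(2)}\in\mathcal{B}(v)\setminus\{B_L\}$; by the above dichotomy, $B^{(2)}$ cannot have a non-cut vertex outside $U$ (else $L'_{B^{(2)}}=L_{B^{(2)}}$ misses $c_2$ by disjointness of the $L$-partition at $v$), nor can it contain $u_0$ as a non-cut vertex (else $L'_{B^{(2)}}=L(u_0)\cup\{c_1\}\setminus\{c_2\}$ again misses $c_2$). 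Hence every vertex of $B^{(2)}$ is a cut-vertex of $T$.

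Iterating this propagation analysis through the successive cut-vertices of $B^{(2)}$, one traces a path $B_L=P_0,P_1,\ldots,P_m$ in the block-cutpoint tree of $T$ such that each internal $P_i$ consists entirely of cut-vertices; since the chain must terminate at a block containing a vertex of $U$ and $u_1,\ldots,u_{\Delta(T)-1}$ all lie in $B_L$, one has $u_0\in P_m$. Applying the same analysis to $L''$, with $c_3$ replacing $c_2$, produces a path in the block-cutpoint tree from $B_L$ to a block $P'\in\mathcal{B}(u_0)$ which either coincides with the $L'$-path (when $c_2$ and $c_3$ lie in the same block of $\mathcal{B}(u_0)$ in the $L$-partition) or diverges at $u_0$.

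The contradiction now splits into two cases. If the two propagations coincide, a careful analysis of the terminal block $P_m$ together with the structural restrictions imposed by the presence of non-cut vertices of $T$ outside $U$ in blocks attached to $P_m$ forces an inconsistency — typically, requiring $c_1\in L_B$ at some block $B$ containing a non-cut vertex not in $U$, which is impossible since $c_1\notin L(u_0)$. In the divergent case, the $L''$-propagation must cross $u_0$ into the block of $\mathcal{B}(u_0)$ hosting $c_3$ in the $L$-partition; this crossing forces the color $c_3$ into some intermediate $L''_{P_i}$ at a cut-vertex $v_i$ along the path where $c_3\notin L(v_i)$, thereby violating $L''(v_i)=L(v_i)$. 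In either case we obtain a contradiction, so $T$ is $L''$-colorable. The main technical obstacle is the detailed case analysis handling both a cut and a non-cut $u_0$, blocks of different types ($K_k$ and odd cycles) along the path, and verifying in the coincident case that propagation emanating from additional cut-vertices of $P_m$ also leads to a contradiction.
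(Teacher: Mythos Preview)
Your overall strategy---assume $T$ is not $L''$-colorable, apply Theorem~\ref{th:charac} to $L$, $L'$, and $L''$ simultaneously, and chase the discrepancy through the block structure until it reaches $u_0$---is the same as the paper's. However, you miss the key structural simplification that makes the argument tractable, and your final case analysis does not actually produce a contradiction.

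The missing observation is this: since $B_L\cong K_{\Delta(T)}$ and $v$ has degree $\Delta(T)$, the cut-vertex $v$ has degree $\Delta(T)-1$ inside $B_L$ and hence degree exactly $1$ outside it. Therefore the only other block $B_2$ at $v$ is a single edge $vx$, and one reads off immediately $L_{B_2}=\{c_1\}$, $L'_{B_2}=\{c_2\}$, $L''_{B_2}=\{c_3\}$. The paper then argues (still as a sketch) that this $K_2$ structure propagates: each subsequent block on the unique block-cutpoint path from $v$ to $u_0$ is a $K_2$, and under $L$ these edges carry the labels $\{c_1\}$ and $\{c_2\}$ alternately. With this very rigid picture in hand, checking that the $L''$-partition (which requires $c_3$ to appear on these edges) cannot exist is short. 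By contrast, you only conclude that each intermediate block consists entirely of cut-vertices, which is weaker and leaves you with a branching propagation rather than a single path.

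More seriously, your endgame does not land. In the ``coincident'' case you assert that one is ``typically'' forced into $c_1\in L_B$ for a block $B$ with a non-cut vertex outside $U$, and then invoke $c_1\notin L(u_0)$; but $u_0\in U$, so that hypothesis is irrelevant to such a $B$, and no contradiction is derived. In the ``divergent'' case you claim $c_3\notin L(v_i)$ at some intermediate cut-vertex $v_i$, but nothing in the setup gives you that---$c_3$ is only forbidden from $L(u_0),\dots,L(u_{\Delta(T)-1})$, not from intermediate vertices. To close the argument you need the $K_2$ path with its alternating $\{c_1\}/\{c_2\}$ labels under $L$; once you have that, the uniqueness of the path (since $T$ is a Gallai tree) forces the $L''$-labels along the same edges to accommodate $c_3$, which they cannot.
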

The proof of this corollary is a rather straightforward application of
Theorem \ref{th:charac}; we briefly sketch an argument.

The conditions imply that there must be
a block $B_2 \neq B_L$ containing $v$, and since $B_L$ is a leaf block
with $\Delta(T)$ vertices, $B_2 \cong K_2$, that is, $B_2=vx$, say. Moreover, since
$T$ is neither $L$-colorable, nor $L'$- or $L''$-colorable, 
$L_{B_2}=\{c_1\}$, $L'_{B_2}=\{c_2\}$, and $L''_{B_2}=\{c_3\}$,
where we use the notation from Theorem \ref{th:charac}.
Thus there must be a block $B_3$ in $T$ containing $x$, such that 
$c_2 \in L_{B_3}$ and $c_1 \in L'_{B_3}$.

Now, since $L$ and $L'$ only differs on the lists for the vertices
in $U$, $B_3 \cong K_2$, and, more generally, there is path
from $v$ to $u_0$ in $T$, all edges of which lie in blocks
that are isomorphic to $K_2$. Furthermore, since $T$ is not $L$-
or $L'$-colorable, every block $B_i$ of this path satisfies
that $L_{B_i} = \{c_1\}$ or $L_{B_i} = \{c_2\}$.
Now, since $T$ is a Gallai tree, this is the only path from
$v$ to $u_0$ in $T$. Using this property,
it is now easy to check that this implies that
$T$ must be $L''$-colorable.

The following is the main result of this paper.

\begin{theorem}
\label{thm:avoidprecol}
	Let $G$ be a connected graph with maximum degree $\Delta(G) \geq 4$
	which is not complete and $P \subseteq V(G)$ a subset of vertices.
	If $L$ is a list assignment for $G$ such that
	$|L(v)| \geq \Delta(G)$ if $v \in V(G)\setminus P$,
	then $G$ is $L$-colorable if either
	
	\begin{itemize}
	
	\item[(i)] $|L(v)| \geq 2$ and any two vertices of $P$
	are at distance at least $4$, or
	
	\item[(ii)] $|L(v)| \geq 3$ and any two vertices of $P$
	are at distance at least $3$.
	\end{itemize}
\end{theorem}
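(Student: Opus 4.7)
My plan is to color $P$ first with an arbitrary selection from the short lists, then extend via the degree-choosability framework of Theorem \ref{th:charac}, correcting any failure by invoking Corollary \ref{cor:leaf} to swap the chosen color at some offending $p_i \in P$.

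I would take a minimum counterexample $G$, choose $c_i \in L(p_i)$ for each $p_i \in P$, and set $L'(v) = L(v) \setminus \{c_i : p_i \in N_G(v)\}$ on $G' = G - P$. In both (i) and (ii), the distance hypothesis forces the sets $N_G(p_i)$ to be pairwise disjoint, so each $v \in V(G')$ loses at most one color from its list; thus $|L'(v)| \geq d_{G'}(v)$, so $L'$ is supervalent on $G'$. If $G'$ is $L'$-colorable, the proof is complete. Otherwise, Theorem \ref{th:charac} yields a connected component $T$ of $G'$ that is a Gallai tree with $|L'(v)| = d_T(v)$ everywhere and with the block-structured lists described in (c). A short analysis using $\Delta(G) \geq 4$ (which rules out odd-cycle blocks containing any vertex of degree $\geq 3$) shows that if $T \cap N_G(P) = \emptyset$, every block of $T$ must be $K_{\Delta(G)+1}$, contradicting that $G$ is connected and not complete. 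Hence $T$ meets $N_G(p_i)$ for some $i$.

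To finish, I would modify the choice at $p_i$. I would identify a leaf block $B_L$ of $T$ and, tracing the chain of $K_2$-blocks as in the remark following Corollary \ref{cor:leaf}, locate a vertex $u_0$ of $T$-degree $\Delta(T)-1$ lying in $N_G(p_i)$. Under hypothesis (ii), $|L(p_i)| \geq 3$ provides two distinct alternatives $c_i', c_i'' \in L(p_i) \setminus \{c_i\}$, and Corollary \ref{cor:leaf} applied with $c_1 = c_i$, $c_2 = c_i'$, $c_3 = c_i''$ guarantees that recoloring $p_i$ to at least one of $c_i', c_i''$ destroys the obstruction at $T$. Under hypothesis (i), only one alternative $c_i' \in L(p_i) \setminus \{c_i\}$ is available; the case split then is whether $c_i' \in L(u)$ for every $u \in N_G(p_i) \cap V(T)$ (apply Corollary \ref{cor:leaf} with $c_2 = c_i'$ and an auxiliary $c_3$ drawn from a common list inside $T$) or not (the swap itself enlarges a list on $N_G(p_i) \cap V(T)$, breaking the tightness required for $T$ to be $L'$-bad).

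The main obstacle will be to ensure that a local modification at $p_i$ does not spawn a new $L'$-bad component elsewhere. In case (i), the distance-$4$ condition isolates $N_G(p_i)$ from every other $N_G(p_j)$ — not only are these sets disjoint, but no edge of $G$ joins them — so a swap at $p_i$ only alters lists inside components of $G'$ meeting $N_G(p_i)$, and localized fixes suffice. In case (ii), adjacency between $N_G(p_i)$ and $N_G(p_j)$ is possible, so the argument must be coordinated across all vertices of $P$ touching a common bad component; I expect the bulk of the technical work to lie in verifying that the collection of local corrections provided by Corollary \ref{cor:leaf} can be combined into one globally valid coloring, using the third color in each $L(p_i)$ as the reserve needed to escape cascading failures.
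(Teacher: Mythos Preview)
Your outline has the right opening move (color $P$, pass to $H=G-P$, invoke Theorem~\ref{th:charac}), but the repair step is where it breaks down, and differently in the two parts.

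For part~(i), Corollary~\ref{cor:leaf} is not the right tool: it genuinely requires \emph{three} colors $c_1,c_2,c_3$ that can each be assigned to $p_i$, and with $|L(p_i)|=2$ you only have two. Your ``auxiliary $c_3$ drawn from a common list inside $T$'' does not correspond to any valid recoloring of $p_i$, so even if $T$ were $L''$-colorable this would not yield an $L$-coloring of $G$. More seriously, your dismissal of the cascading problem is wrong. The distance-$4$ condition ensures the neighborhoods $N_G(p_i)$ are pairwise non-adjacent, but a single $p_i$ can still have $\Delta(G)-1$ neighbors in a leaf block of one component $T$ and its remaining neighbor in a \emph{different} component $T'$; swapping the color at $p_i$ to fix $T$ may then make $T'$ bad, forcing a swap at some $p_j$ attached to $T'$, and so on indefinitely. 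The paper resolves this not with Corollary~\ref{cor:leaf} but with a global argument: it shows (Claim~\ref{cl:bad}) that each component has strictly more leaf blocks than ``incoming'' attachments from $P$, encodes the components and their $P$-connections as a digraph, extracts an acyclic out-degree-$1$ subgraph via Lemma~\ref{lem:digraph}, and recolors along a topological order so that no repaired component is ever revisited.

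For part~(ii), Corollary~\ref{cor:leaf} \emph{is} the endgame, but its hypotheses are restrictive: you need all of $u_0,u_1,\dots,u_{\Delta(T)-1}$ adjacent to the single vertex $p_i$, i.e.\ some leaf block $B_L$ whose $\Delta(G)-1$ non-cut vertices are \emph{all} neighbors of $p_i$, with $p_i$'s one remaining neighbor playing the role of $u_0$. With distance only $3$, a leaf block can have non-cut vertices attached to several different members of $P$, and nothing in your setup guarantees such a ``monochromatic'' leaf block exists. The paper spends most of the proof of (ii) engineering this: it builds an auxiliary bipartite multigraph $J$ between $P$ and the leaf blocks, applies K\"onig's edge-coloring theorem and an Eulerian-circuit parity argument to extract a sparse subgraph, and from this constructs a graph $F''$ on $P$ whose proper $L$-coloring (it is unicyclic, hence list-colorable from size-$3$ lists) forces every bad component to contain a leaf block of exactly the required form. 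Only then does the minimum-bad-components argument combined with Corollary~\ref{cor:leaf} finish. Your final paragraph correctly identifies that coordinating the local fixes is ``the bulk of the technical work,'' but you have not supplied any mechanism for it.
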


Before proving the theorem, let us demonstrate that it is sharp both with respect to
list sizes and the distance conditions. For the case
$\Delta(G) = 3$, we do not know whether a result as in (i) holds, but the 
example below shows that distance $4$ would be best possible.

That the list size cannot be improved in part (i) follows by
taking two copies $K^{(1)}_{\Delta-1}$  and $K^{(2)}_{\Delta-1}$ of 
$K_{\Delta-1}$, where all vertices have the list $\{1,\dots, \Delta\}$ 
and making exactly one vertex of $K^{(1)}_{\Delta-1}$
adjacent to exactly one vertex of $K^{(2)}_{\Delta-1}$. All other vertices
of $K^{(1)}_{\Delta-1}$ are adjacent to a vertex $u$ with the list $\{1\}$
and all vertices of $K^{(2)}_{\Delta-1}$ are adjacent to a vertex $v \neq u$ with
the list $\{1\}$. 
Clearly, this graph is not list colorable.

\begin{figure} [h]
	\begin{center}

	\begin{tikzpicture}[scale=0.5]
	\tikzset{vertex/.style = {shape=circle,draw,
												inner sep=0pt, minimum width=5pt}}
	\tikzset{edge/.style = {-,> = latex'}}

	\node[vertex] (x_1) at  (-6, 4) {};
	\node[vertex] (x_2) at  (-2, 4) {};
	\node at (-4,4) {$\dots$};
	\node[vertex] (x_3) at  (-4, -1) {};
	\draw (-4,2) circle (3.5);
	\node at (-4,2) {\large $\{1,\dots,\Delta\}$};
	\node at (-9,2) {\Large $K_{\Delta}$};

	\node[vertex] (y_1) at  (4, 6) {};
	\node[vertex] (y_2) at  (8, 6) {};
	\node at (6,6) {$\dots$};
	\node[vertex] (y_3) at  (4, 10) {};
	\draw (6,8) circle (3.5);
	\node at (6,8) {\large $\{1,\dots,\Delta\}$};
	\node at (11,8) {\Large $K_{\Delta}$};

	\node[vertex] (z_1) at  (4, -3) {};
	\node[vertex] (z_2) at  (4, -7) {};
	\node at (4,-5) {$\vdots$};
	\node[vertex] (z_3) at  (8, -3) {};
	\draw (6,-5) circle (3.5);
	\node at (6.7,-5) {\large $\{1,\dots,\Delta\}$};
	\node at (11,-5) {\Large $K_{\Delta}$};

	\node[vertex] (u) at  (-1.5, 8) {};
	\node at (-3,8) {$\{1,2\}$};
	\node[vertex] (v) at  (8, 2) {};
	\node at (9.5,2) {$\{1,2\}$};
	\node[vertex] (w) at  (-1, -3) {};
	\node at (-2.5,-3) {$\{1,2\}$};

	\draw[edge] (u) to node [] {} (x_1);
	\draw[edge] (u) to node [] {} (x_2);
	\draw[edge] (u) to node [] {} (y_3);
	
	\draw[edge] (v) to node [] {} (y_1);	
	\draw[edge] (v) to node [] {} (y_2);
	\draw[edge] (v) to node [] {} (z_3);

	\draw[edge] (w) to node [] {} (z_1);	
	\draw[edge] (w) to node [] {} (z_2);
	\draw[edge] (w) to node [] {} (x_3);


%
\end{tikzpicture}
\end{center}
\caption{A graph with a list assignment and no list coloring.}
	\label{fig:vertex}
\end{figure}
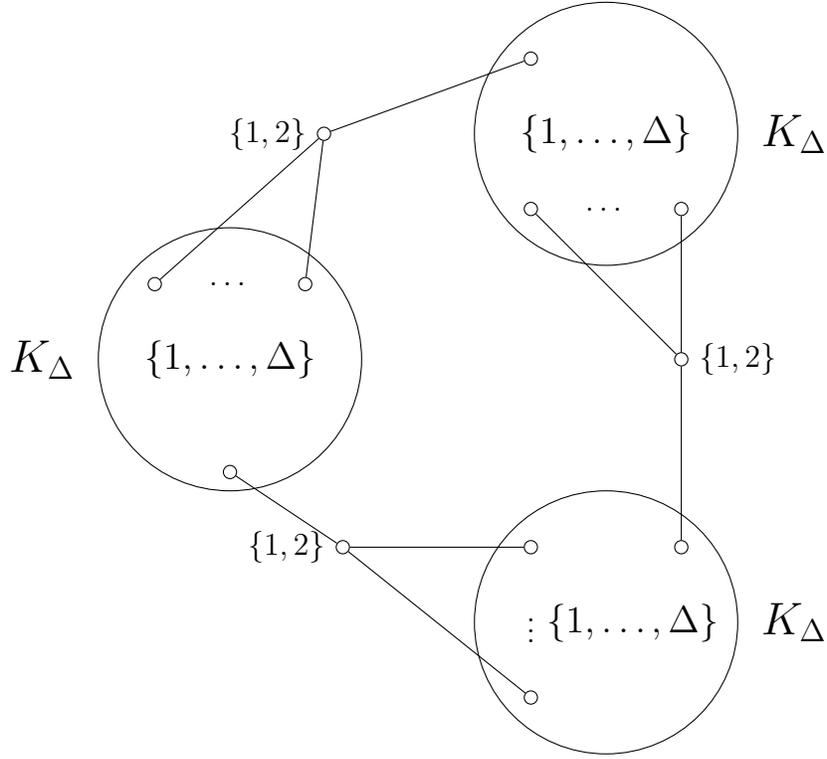

That the distance condition in part (i) is best possible  follows from
the fact that the $\Delta$-regular graph in Figure \ref{fig:vertex} is not
list colorable.
Here all vertices that are contained in copies of $K_{\Delta}$ are assigned the list
$\{1,\dots,\Delta\}$, while the vertices not contained in the copies of $K_{\Delta}$
are assigned the list $\{1,2\}$. Since at least two of the vertices
not contained in the copies of $K_{\Delta}$ must be colored by the same color,
this graph is not list colorable.
Hence, the distance condition in part (i) cannot be improved.
Moreover, this example also shows that the list size in part (ii) is sharp.

That the distance condition in part (ii) is best possible can be seen
by taking a copy of $K_{\Delta-1}$ along with two vertices $u$ and $v$
that are joined to every vertex of $K_{\Delta-1}$ by an edge.
Assign the list $\{1,2,3\}$ to $u$ and the list $\{4,5,6\}$ to $v$ and
the list $\{1,2,\dots, \Delta\}$ to all vertices of $K_{\Delta-1}$.
Then this graph is not list colorable.

Let us now turn to the proof of Theorem \ref{thm:avoidprecol}.
We shall need the following lemma. Recall that an {\em acyclic} subgraph
of a digraph is a subgraph with no directed cycle.

\begin{lemma}
\label{lem:digraph}
	If $D$ is a digraph and $S \subseteq V(D)$ a set of vertices $v$ satisfying
	that the outdegree is strictly larger than the indegree of $v$, then there
	is an ayclic subgraph $D'$ of $D$ such that every vertex of $S$ has outdegree
	$1$ in $D'$.
\end{lemma}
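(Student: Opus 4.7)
My plan is to construct $D'$ directly by assigning to each $v \in S$ a single outgoing edge $v \to f(v)$ so that the resulting functional subgraph is acyclic. The idea is to stratify $S$ into layers in such a way that the chosen edge from each vertex always points to a strictly lower layer, which immediately rules out directed cycles.

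I would define the layers inductively by setting $S_0 = V(D) \setminus S$ and, for $k \geq 1$,
$$S_k = \bigl\{v \in S \setminus (S_1 \cup \cdots \cup S_{k-1}) : v \text{ has an out-neighbor in } S_0 \cup \cdots \cup S_{k-1}\bigr\}.$$
Assuming for the moment that the $S_k$ together exhaust $S$, I pick $f(v)$ to be any out-neighbor of $v$ lying in $S_0 \cup \cdots \cup S_{k-1}$ whenever $v \in S_k$, and let $D'$ be the subgraph with edge set $\{v \to f(v) : v \in S\}$. By construction each vertex of $S$ has outdegree exactly $1$ in $D'$, while vertices of $V(D) \setminus S$ have outdegree $0$. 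Moreover, along any directed walk in $D'$ the layer index strictly decreases, so no such walk can close up into a cycle; hence $D'$ is acyclic.

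The main (and essentially only) obstacle is to show that the stratification really does exhaust $S$. Set $C = S \setminus \bigcup_{k \geq 1} S_k$ and suppose for contradiction that $C \neq \emptyset$. Any $v \in C$ must have all its out-neighbors inside $C$, for otherwise it would have been placed in some $S_k$. Hence every edge with tail in $C$ also has head in $C$, which yields
$$\sum_{v \in C} d^+(v) = |E(D[C])| \le |E(D[C])| + e(V(D) \setminus C,\, C) = \sum_{v \in C} d^-(v),$$
where $e(V(D) \setminus C, C)$ counts the edges entering $C$ from outside. On the other hand, $C \subseteq S$ and the hypothesis gives $d^+(v) \ge d^-(v) + 1$ for every $v \in C$, so $\sum_{v \in C} d^+(v) \ge \sum_{v \in C} d^-(v) + |C| > \sum_{v \in C} d^-(v)$. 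This contradiction forces $C = \emptyset$, completing the construction of $D'$.
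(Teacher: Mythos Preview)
Your proof is correct. The paper itself does not give a proof of this lemma; it merely remarks that the statement is straightforward since every vertex of $S$ has strictly larger outdegree than indegree, and leaves the details to the reader. Your layering construction and the counting argument showing that $C=\emptyset$ fill in those details cleanly; in particular, the key point---that if any subset $C\subseteq S$ had all its out-edges staying inside $C$ then $\sum_{v\in C} d^+(v)\le \sum_{v\in C} d^-(v)$, contradicting the hypothesis---is exactly the kind of reasoning the paper is alluding to.
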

Since every vertex of $S$ has strictly
larger outdegree than indegree, this is straightforward; we leave the details
to the reader.

\begin{proof}[Proof of Theorem \ref{thm:avoidprecol}]
	Since (in both cases) $P$ is independent and $|L(v)| \geq 2$ if $v \in P$, 
	there is an $L$-coloring $\varphi'$ of the vertices in $P$. 
	Define a new list assignment $L'$ for $H=G-P$ by, for $v \in V(H)$, setting 
	$$L'(v) = L(v) \setminus \{\varphi'(u) : \text{$u$ is adjacent to $v$}\}.$$
	Since $|L(v)| \geq \Delta(G)$ if $v\in V(G) \setminus P$,
	it follows from Theorem \ref{th:charac}
	that any component of $H$ that is not $L'$-colorable is a Gallai tree.
	Moreover, since any vertex in $G$ is adjacent to at most one vertex of $P$,
	every component of $H$ that is not $L$-colorable
	has minimum degree $\Delta(G)-1$, and all vertices of such a component
	has degree $\Delta(G)$ in $G$.

	Suppose that 
	some component $T$ of $H$ together with the restriction of $L'$ to $T$ 
	is not list colorable, and thus 
	%
	satisfies the conditions in Theorem \ref{th:charac}. We call such a component
	{\em $\varphi'$-bad} (or just {\em bad}).
	Suppose further that some vertex $p \in P$ is adjacent to exactly $\Delta(G) -1$ vertices
	of a leaf block $B_1 \cong K_{\Delta(G)}$ in $T$ and has no other neighbor in $T$.
	By recoloring $p$ with another color of its list we get a new coloring
	$\varphi''$ of $P$ so that
	$T$ is not $\varphi''$-bad.
	Similarly, if $p \in P$ has only one neighbor in $T$, then by recoloring $p$
	we obtain a coloring $\varphi''$ such that $T$ is not $\varphi''$-bad.
	Our proofs of both part (i) and (ii) relies heavily on this simple observation.
	
	Without loss of generality, we shall assume that every component of $H$ satisfies
	the conditions (a) and (b) of Theorem \ref{th:charac} (with $L'$ in place of $L$); 
	otherwise, we just consider a suitable subgraph of $H$, 
	since components not satisfying these conditions cannot be bad.
	
	\bigskip
	
	After these initial observations, let us now
	prove part (i) of the theorem. 
	Since $G$ is not complete, and any two vertices of $P$ are at 
	distance at least $4$,
	no component of $H$ is isomorphic to a complete graph, and 
	no two vertices of $P$ are adjacent to 
	vertices of the same block in $H$,
	and
	thus no vertex of $P$ is adjacent to vertices of two different leaf blocks.
	This implies that every component of $H$
	contains at least two leaf blocks. Moreover,
	if $T$ is such a component,
	then every leaf block $B_L$ of $T$ is isomorphic to
	$K_{\Delta(G)}$, $\Delta(G)-1$ vertices of $B_L$ are adjacent to the same
	vertex of $P$, and the cut-vertex of $B_L$ is not adjacent to any vertex of $P$.
	
	Now, suppose $\varphi'$ is a coloring of $P$ so that
	some component $T$ of $H$ is $\varphi'$-bad.
	We shall prove that by recoloring some vertices 
	$P' \subseteq P$ satisfying that if $p \in P'$ then $p$ has $\Delta(G)-1$ neighbors
	in a leaf block,
	we can obtain an $L$-coloring $\psi$ of $P$ so that no component of
	$H$ is $\psi$-bad. Indeed, in the following we shall describe a recoloring procedure.
	
	To this end, we shall prove the following claim.
	Let $P_1$ be the set of all vertices of $P$ that have at least one neighbor in
	a leaf block of $H$, and let
	$V_1$ be the set of all vertices in $H$ that are not contained in a leaf-block
	and has a neighbor in $P_1$.
	
	\begin{claim}
	\label{cl:bad}
			If $T$ is a component in $H$, then the number of leaf-blocks in $T$
			is greater than the number of vertices in $T$ that are in $V_1$.
	\end{claim}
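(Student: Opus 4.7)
My plan is to analyze the block-cutpoint tree $BT$ of $T$ and apply a charging argument based on the elementary identity $\sum_{v\text{ internal in }BT}(\deg_{BT}(v)-2)=L-2$, where $L$ is the number of leaves of $BT$, equivalently the number of leaf blocks of $T$.

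First, I extract structural consequences from the standing assumption that $T$ satisfies (a) and (b) of Theorem~\ref{th:charac}. Since $|L(v)|\geq\Delta(G)$ for every $v\in V(T)$ and every vertex of $G$ has at most one $P$-neighbor (by the distance-$4$ hypothesis), the identity $|L'(v)|=d_T(v)$ forces $d_G(v)=\Delta(G)$ and hence $d_T(v)\in\{\Delta(G)-1,\Delta(G)\}$. A key preliminary is that $V_1$ is an independent set in $T$: an edge between two $V_1$-vertices would place their $P$-neighbors at distance at most $2$, forcing them equal, but every $p\in P_1$ already uses $\Delta(G)-1$ of its at most $\Delta(G)$ neighbors on its leaf block and so has at most one other (``extra'') neighbor.

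Next, I characterize $V_1\cap V(T)$: each such $w$ has $d_T(w)=\Delta(G)-1$, and is of one of two types. Either (a) $w$ is the unique non-cut-vertex of a non-leaf block $B_w\cong K_{\Delta(G)}$ whose other $\Delta(G)-1$ vertices are all cut-vertices of $T$ (so $B_w$ has $BT$-degree $\Delta(G)-1$), or (b) $w$ is itself a cut-vertex of $T$ and every block containing $w$ has all of its vertices as cut-vertices of $T$; in case (b), the latter is forced because any non-cut-vertex of such a block would have to share $w$'s $P$-neighbor $p_w$, contradicting that $w$ is the unique extra neighbor of $p_w$.

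I then charge each $w\in V_1\cap V(T)$ one unit to an internal $BT$-node of degree at least $3$: type (a) to $B_w$; type (b) with $s\geq 3$ containing blocks to $w$ itself; and type (b) with $s=2$ to one of its two containing blocks of size at least $3$ (such a block exists since $d_T(w)=\Delta(G)-1\geq 3$ forces one of the two blocks to contribute degree $\geq 2$). The main step is to verify that no $BT$-node $N$ receives more charge than its capacity $\deg_{BT}(N)-2$: type (a) block targets are charged only by their unique non-cut-vertex; cut-vertex targets are charged only by themselves; and type (b) block targets of size $k$ receive charges from an independent subset of the block's cut-vertices, which has size at most $k-2$ whether the block is a complete graph (trivially $\leq 1$) or an odd cycle (using $\lfloor k/2\rfloor\leq k-2$ for $k\geq 3$). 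Summing the charges and applying the tree identity then gives $|V_1\cap V(T)|\leq L-2<L$. The main obstacle is the capacity check at odd-cycle blocks, where several $V_1$-vertices can land in the same block simultaneously, and this is exactly where the independence of $V_1$ in $T$ is essential.
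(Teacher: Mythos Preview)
Your argument is correct and follows essentially the same strategy as the paper's: both proofs work in the block--cutpoint tree of $T$ and compare $|V_1\cap V(T)|$ to the number of leaves via degree counting. The paper does this more tersely by asserting that no block contains two vertices of $V_1$ and then mapping each $V_1$-vertex injectively to a node of degree at least~$3$ in the block--cutpoint tree, finishing with the fact that a tree has more leaves than such nodes. Your charging scheme with capacities $\deg_{BT}(N)-2$ is a genuine refinement of this: it does not rely on the ``at most one $V_1$-vertex per block'' assertion (which is delicate for long odd-cycle blocks, where two $V_1$-vertices at distance~$\ge 2$ on the cycle would not immediately violate the distance-$4$ hypothesis), and instead bounds the number of $V_1$-vertices in a block by $k-2$ using independence. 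As a by-product you also obtain the slightly sharper inequality $|V_1\cap V(T)|\le L-2$. One minor remark: your justification for type~(b), that a non-cut-vertex in a block through $w$ would have to share $w$'s $P$-neighbor, is more involved than necessary; since $w$ is a cut-vertex with $d_T(w)=\Delta(G)-1$, every block $B\ni w$ satisfies $d_B(w)\le\Delta(G)-2$, hence $d_B(u)\le\Delta(G)-2<d_T(u)$ for every $u\in B$, forcing $u$ to be a cut-vertex directly.
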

\begin{proof}
	Let us first note that no block of $H$ contains two vertices of $V_1$, 
	as follows from the definition of $V_1$.
	Suppose now that $v$ is a vertex of $V_1$. 
	Consider the
	block-cutpoint graph $W$ of $T$.
	Since $v$ has degree 
	at least four in $G$, and no other
	vertex of a block containing $v$ has a neighbor in $V_1$, it follows
	that $v$ either is a cut-vertex of degree at least $3$ in $W$, 
	or that there is a block vertex of degree at least $3$
	in $W$, whose corresponding block in $T$ contains $v$. 
	Moreover, since every block of $T$
	contains at most one vertex from $V_1$,
	we get that for each vertex of $V_1$, there is a unique vertex
	of degree at least $3$ in $W$.
	Now, since $W$ is a tree, it contains more leafs than vertices of degree at least three.
	Hence, the number of leaf blocks of $T$ is greater than $|V_1 \cap V(T)|$.
\end{proof}

	We now define a directed graph $D$ in the following way.
	Each component $T$ in $H$ is represented by a vertex $t$,
	and there is a directed edge from a vertex $t$ to another vertex $t'$
	if there is a vertex $p$ adjacent to $\Delta(G)-1$ vertices of a leaf block
	in $T$ that is also adjacent to a vertex 
	of $T'$. Moreover, for every leaf-block of a component $T$
	of $H$ for which the adjacent vertex $p \in P$ is not adjacent to any other vertex
	in $H$,
	we add a new vertex $x_p$ and an arc $(t,x_p)$;
	thus $x_p$ has indegree $1$ and outdegree $0$ in the digraph $D$.
	Furthermore, every directed edge of $D$ corresponds to a vertex in $P$.

	By Claim \ref{cl:bad} every vertex of $D$ corresponding to a component of
	$H$ has larger outdegree than indegree, so
	it follows from Lemma \ref{lem:digraph} that there is an acyclic directed
	subgraph $D'$ of $D$ such that every vertex corresponding to a component of
	$H$ has outdegree $1$; denote these vertices by $S$.
	
	Since $D'$ is acyclic there is a linear order $\prec$ of the vertices of $S$
	such that if $(t,t')$ is a directed edge of $D'$, then $t \prec t'$.
	
	Suppose $S = \{t_1,\dots,t_n\}$, where $t_1 \prec t_2 \prec \dots \prec t_n$.
	We now traverse the vertices $t_1, t_2,\dots$
	according to $\prec$ in the following
	way. Suppose $(t_1,x) \in A(D')$ (where $x$ might be in $S$).
	
	\begin{itemize}
	
	\item If $t_1$ corresponds to a $\varphi'$-bad component and $(t_1,x) \in A(D')$,
	then we recolor the vertex $p_1$ corresponding to the arc $(t_1, x) \in A(D')$
	to obtain a new coloring $\varphi_1$ of $P$. Note that by the construction of
	$D'$ and the order $\prec$, the only component of $H$ that might be
	$\varphi_1$-bad but not $\varphi'$-bad is a component of $H$ corresponding
	to $x$, if there is such a component. 
	Moreover, after this recoloring, $T_1$ is not $\varphi_1$-bad.
	
	\item If $t_1$ is not bad, then we just set $\varphi_1 =\varphi'$
	and continue with the next vertex
	according to the order $\prec$.
	
	\end{itemize}
	
	We continue this process for all vertices of $S$ according to the linear
	order $\prec$ and define $L$-colorings $\varphi_1,\varphi_2, \dots$.
	Then every vertex in $S$ is considered once, and the process terminates
	after a finite number of steps.
	
	Now, since $t_i \prec t_j$ precisely when there is no directed path from $t_j$
	to $t_i$ in $D'$, it follows that no component $T_i$ corresponding to a vertex 
	$t_i \in S$ can be $\varphi_j$-bad for $j \geq i$.
	Hence, after this process terminates when all vertices of $S$ have been considered,
	we obtain a proper $L$-coloring 
	$\varphi_n$ where no component of $H$ is $\varphi_n$-bad.
	This proves part (i).
	
	\bigskip
	
	Let us now prove part (ii). 
	We shall construct a subgraph $F''$ of $G$ for coloring the 
	vertices of $P$ from their lists
	$L$.
	
	Consider the bipartite multigraph $J$ with parts $P$ and $\mathcal{B}$,
	where $\mathcal{B}$ contains a vertex
	for every leaf-block of $H$, and where a vertex
	$p$ of $P$ is joined by an edge to a vertex
	$b \in \mathcal{B}$ for each edge between 
	$p$ and the block corresponding to $b$ in $H$.
	In $J$, every vertex of $P$ has degree at most
	$\Delta(G)$, and every vertex of $\mathcal{B}$ has degree at
	least $\Delta(G)-1$ and at most $\Delta(G)$. 
	Thus, by K\"onig's edge coloring theorem, there is a proper
	$\Delta(G)$-edge coloring of $J$.
	
	We denote by $J'$ the subgraph of $J$ induced by the edges colored $1, 2,3,4$.
	Then $\Delta(J') \leq 4$ and every vertex of $\mathcal{B}$ has
	degree at least $3$ in $J'$.
	Note further that if some vertex $b \in \mathcal{B}$ 
	has degree $3$ in $J'$, then $b$
	corresponds to a leaf block of $H$ that is contained in a component with at
	least two leaf blocks.

	From $J'$ we form a new graph $J''$ with vertex set $V(J')$
	by for every component $T$ of $H$, where
	every leaf-block of $T$ corresponds to a vertex of degree $3$ in $J'$, 
	arbitrarily selecting two vertices
	$b_1$ and $b_2$, corresponding to different leaf-blocks of $T$, and 
	redistributing the edges incident
	with $b_1$ and $b_2$ so that one edge incident with $b_2$ is ``moved to'' $b_1$; 
	thus $b_1$ gets
	degree four in $J''$ and $b_2$ gets degree two. Note that this does not
	change the degrees of vertices in $P$.
	
	Now, by arbitrarily adding edges between pairs of vertices of odd degree 
	in $J''$, we obtain a graph $K$
	where all vertices have even degree at most $4$.
	Thus every component of $K$ is Eulerian and by taking every second edge in an 
	Eulerian circuit of every component
	of $K$ we obtain a subgraph $F_{K}$ where every vertex has degree $0,1$ or $2$.
	Denote the restriction of this subgraph to $J''$ by $F_{J}$,
	and let $F$ be the subgraph of $J'$ corresponding to $F_J$.
	Then every vertex of $F$ has degree at most two and, moreover, 
	every component of $H$ has a leaf block
	that corresponds to a vertex of degree $2$ in $F$.
	
	Next, let $\mathcal{B}_2$ be the set of vertices in $\mathcal{B}$ 
	that have degree $2$ in $F$ and consider the vertex-induced 
	subgraph $F[\mathcal{B}_2 \cup P]$.
  We define $\mathcal{B}_d$ to
	be the subset of $\mathcal{B}_2$ containing all vertices that
	are incident to two different vertices of $P$ in $F[\mathcal{B}_2 \cup P]$.
	Let $F'$ be the subgraph of $F$ induced by $\mathcal{B}_d \cup P$.

	Now, in $F[\mathcal{B}_2 \cup P]$ every vertex of 
	$\mathcal{B}_2 \setminus \mathcal{B}_d$ is incident with 
	two edges with the same endpoint in $P$, so every vertex of $P$
	with a neighbor in $\mathcal{B}_2 \setminus \mathcal{B}_d$,
	has no other neighbor in $F[\mathcal{B}_2 \cup P]$.
	Set 
	$\mathcal{B}_1 = \mathcal{B}_2 \setminus \mathcal{B}_d$,
	and denote by $\mathcal{B}'_1$ the set of vertices in
	$\mathcal{B}_1$ that correspond to leaf blocks in $H$ with at least
	two different neighbors in $P$ in $G$. Let $P'$ be the
	set of vertices in $P$ that are  adjacent to vertices of 
	$\mathcal{B}'_1$ in $F$.

	From $F'$ we form a new graph $F''$ with
	$V(F'')=P$ and where the edge set is constructed iteratively in the following way:
	\begin{itemize}
	
		\item firstly, two vertices of $P$ are adjacent if
	they have a common neighbor in $F'$; 
	
	\item secondly,
	for all vertices $p'$ of $P'$ we do the following: suppose $p'$ is adjacent to $b$
	in $F$. 
	We add an edge between $p'$ and another arbitrary vertex $p'' \in P$
	that in $J$ is adjacent to $b$.
	\end{itemize}
	
	We note the following properties of $F''$.
	
	\begin{itemize}
	
	\item[(a)] Since every vertex of $F$ has degree at most $2$, the same
	holds for the induced subgraph $F''[P\setminus P']$. Thus, by construction,
	every component of $F''$ is unicyclic, i.e., has at most one cycle.
	
	\item[(b)] Every component $T$ of $H$ that is isomorphic to $K_{\Delta(G)}$ 
	satisfies that two vertices in $P$ with neighbors in $T$ are adjacent in
	$F''$.
	
	\item[(c)] If the vertices of a leaf block $B$ of $H$,
	which corresponds to a vertex in $\mathcal{B}_2$, 
	has at least two
	different vertices in $P$ as neighbors in $G$, then 
	there are two vertices $p,p' \in P$, both with at least one neighbor in $B$,
	that are adjacent in $F''$.
	
	\end{itemize}
	
	We now color the vertices of $P$ as follows.
	By (a), the {\em core} of $F''$,
	obtained by successively removing vertices of degree $1$ from $F''$,
	has maximum degree at most $2$.
	Hence $F''$ is properly colorable from the lists $L$ of vertices in $P$.
	We take such a coloring $\varphi$ of $P$ with a minimum 
	number of $\varphi$-bad components in $H$.
	
	Now, consider this coloring of the vertices of $P$ in the graph $G$.
	It follows from Theorem \ref{th:charac} 
	that if a component $T$ of $H$ is $\varphi$-bad,
	then for every leaf-block $B$ of $T$, every vertex of $P$ that
	has a neighbor in $B$ has the same color under $\varphi$.
	Now, since every component of $H$ has a leaf block that corresponds
	to a vertex of $\mathcal{B}_2$, it follows from (c)
	that there is a leaf-block
	$B_L$ in $T$ where $\Delta(G)-1$ vertices are adjacent to the
	same vertex $p$ in $P$; that is, $p$ has at most one
	neighbor outside $B_L$.
	
	Suppose $\varphi(p)=c_1$ and
	$L(p)=\{c_1,c_2,c_3\}$. Since $p$ has at most one neighbor outside
	$B_L$, it follows that by recoloring $p$ by the color $c_2$ 
	we get a new $L$-coloring $\varphi'$ of $P$ such that either 
	$T$ is $\varphi'$-bad (and then the neighbor
	of $p$ not in $B_L$ is in $T$), or $T$ is not $\varphi'$-bad but some other component 
	is $\varphi'$-bad but not $\varphi$-bad, since $\varphi$ is an $L$-coloring
	with a minimum number of $\varphi$-bad components. Now, since coloring $p$
	by the color $c_2$ does not decrease the number of bad components
	and $p$ has at most one neighbor outside $B_L$, 
	we may color $p$ by the color $c_3$
	to obtain a coloring $\varphi''$ with fewer bad components.
	In the case when $p$ has a neighbor outside $T$, this follows from our
	initial observation before the proof of part (i),
	and in the case when $p$ has all neighbors in $T$, then it follows
	from Corollary \ref{cor:leaf}.
	In both cases, this contradicts the choice of $\varphi$ and thus completes the proof
	of the theorem.
\end{proof}

It is natural to ask if it is possible to prove a version of Theorem
\ref{thm:avoidprecol}
where the set $P$ of vertices with shorter lists is allowed to be 
any independent set.

\begin{problem}
\label{prob:ind}
	Let $G$ be a graph with maximum degree $\Delta=\Delta(G) \geq 4$, 
	not containing a copy of $K_{\Delta+1}$,
	and $P$ an independent set. Is there a $k=k(\Delta)< \Delta$
	such that if $L$ is a list assignment where
	\begin{itemize}
	 \item $|L(v)| =k$, if $v \in P$, and
	
		\item $|L(v)| = \Delta$, if $v \in V(G) \setminus P$,
	\end{itemize}
	then $G$ is $L$-colorable?
\end{problem}

Since $P$ is independent, it follows from Theorem \ref{th:charac} that
any minimal counterexample $G$ to this question
must satisfy that all vertices of $V(G) \setminus P$ has degree $\Delta(G)$.

In particular, we are interested in whether
Problem \ref{prob:ind} has a positive solution in the case when then lists of $P$ have size
$|L(v)| = \Delta(G)-1$; in the next section we investigate this question further
for the special case when all lists are chosen from the
set $\{1,\dots, \Delta(G)\}$.


\section{Avoiding colorings}

In this section we consider the problem of coloring a graph $G$ using colors $1,\dots, \Delta(G)$,
subject to the condition that some colors cannot be used on certain vertices.
Let us begin by noting the following reformulation of Theorem \ref{thm:avoidprecol},
which is a generalization of Brooks' theorem for ordinary graph coloring
for the case of $\Delta(G) \geq 4$.

\begin{corollary}
\label{cor:forbid}
	Let $G$ be a connected graph with maximum degree $\Delta(G) \geq 4$ that is not
	complete. Suppose that a set
	$P$ of vertices is assigned a list of forbidden colors.
	If either
	\begin{itemize}
	\item[(i)] each vertex of $P$ has most $\Delta(G)-2$ forbidden 
	colors and any two vertices of $P$
	are at distance at least $4$, or
	
	\item[(ii)] each vertex of $P$ has most $\Delta(G)-3$ forbidden colors
	and any two vertices of $P$
	are at distance at least $3$,
	\end{itemize}
	then there is a proper $\Delta(G)$-coloring of $G$ which respects the
	forbidden colors.
\end{corollary}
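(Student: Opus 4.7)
My plan is to observe that Corollary \ref{cor:forbid} is essentially a reformulation of Theorem \ref{thm:avoidprecol}, obtained by dualizing ``forbidden colors'' into ``allowed lists'' drawn from the palette $\{1,\dots,\Delta(G)\}$. So the proof will be a short reduction, and all the real work has already been done in the preceding section.

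Concretely, for each $v \in P$ let $F(v) \subseteq \{1,\dots,\Delta(G)\}$ denote the set of forbidden colors assigned to $v$, and define a list assignment $L$ for $G$ by
\[
L(v) = \begin{cases} \{1,\dots,\Delta(G)\} \setminus F(v), & v \in P, \\ \{1,\dots,\Delta(G)\}, & v \in V(G) \setminus P. \end{cases}
\]
A proper $\Delta(G)$-coloring $\varphi$ respects the forbidden colors precisely when $\varphi(v) \in L(v)$ for every $v \in V(G)$, so producing an $L$-coloring of $G$ and producing a proper $\Delta(G)$-coloring respecting the forbidden colors are the same task.

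It therefore remains only to check the hypotheses of Theorem \ref{thm:avoidprecol}. For vertices $v \in V(G) \setminus P$, clearly $|L(v)| = \Delta(G)$. For vertices $v \in P$, the bound $|F(v)| \leq \Delta(G)-2$ in case (i) yields $|L(v)| \geq 2$, and the bound $|F(v)| \leq \Delta(G)-3$ in case (ii) yields $|L(v)| \geq 3$. The distance conditions on $P$ are identical in both statements. Thus the hypotheses of the corresponding case of Theorem \ref{thm:avoidprecol} are met, and the theorem supplies an $L$-coloring of $G$, which is the required proper $\Delta(G)$-coloring respecting the forbidden colors.

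Since this is a direct translation, there is no real obstacle; the only mild point to be mindful of is to make the palette restriction explicit (forbidden colors come from $\{1,\dots,\Delta(G)\}$) so that ``at most $\Delta(G)-k$ forbidden colors'' translates cleanly to ``at least $k$ allowed colors.'' No case analysis or recoloring argument is required at this stage — all such work is encapsulated in Theorem \ref{thm:avoidprecol}.
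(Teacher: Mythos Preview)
Your proposal is correct and matches the paper's approach: the paper does not give a separate proof of Corollary~\ref{cor:forbid} at all, merely introducing it as ``the following reformulation of Theorem~\ref{thm:avoidprecol}.'' Your explicit translation of forbidden colors into complementary lists over $\{1,\dots,\Delta(G)\}$ is exactly the intended reduction.
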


An interesting special case of
Problem \ref{prob:ind} is when the lists of $P$ have size
$|L(v)| = \Delta(G)-1$ and all lists use colors from 
$\{1,\dots, \Delta(G)\}$, which would be a useful generalization of
Brooks' theorem along the lines of Corollary \ref{cor:forbid}. 
We note the following partial result
towards such a general theorem.
Recall that a partial coloring $\varphi$ of $G$ is {\em avoidable} 
if there is a proper coloring $f$ of $G$ such that $f(v) \neq \varphi(v)$
for any vertex that is  colored under $\varphi$.

	\begin{proposition}
	\label{prop:ind}
			Let $G$ be a connected graph with maximum degree $\Delta(G) \geq 4$ that is
			not complete, and $\varphi$ a precoloring of
			a subset $P \subseteq V(G)$.
			If $P$ is independent and every vertex in $V(G) \setminus P$
			is either adjacent to at least three vertices from $P$
			or to no vertex from $P$,
			then  $\varphi$ is avoidable.
	\end{proposition}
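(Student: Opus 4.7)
The plan is to color $P$ using only two colors from $\{1,2\}$ and then extend to $H=G-P$ via Theorem \ref{th:charac}. The key point is that every vertex of $V(G)\setminus P$ with a $P$-neighbor has at least three of them, so any two-coloring of $P$ must repeat a color among these neighbors, leaving at least $\Delta(G)-2$ colors available on each such vertex.

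First, I would dispose of the case $P=\emptyset$ directly via Brooks' theorem ($\Delta(G)\ge 4$ rules out odd cycles and $G$ is non-complete). Assuming $P\ne\emptyset$, write $V'=V(G)\setminus P$ and let $V'_P$ denote the vertices of $V'$ with at least three $P$-neighbors; by hypothesis every other vertex of $V'$ has none. For each $p\in P$ I pick $\varphi'(p)\in\{1,2\}\setminus\{\varphi(p)\}$, which is nonempty; since $P$ is independent this gives a proper partial coloring that avoids $\varphi$. On $H=G[V']$ I define $L'(v)=\{1,\dots,\Delta(G)\}\setminus\{\varphi'(u):u\in P,\, u\sim v\}$ and seek an $L'$-coloring of $H$.

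The main computation is to check supervalence with strict slack on $V'_P$: if $v\in V'_P$ then its $P$-neighbors are colored from $\{1,2\}$, so at most two distinct colors appear among them and $|L'(v)|\ge\Delta(G)-2$, while $d_H(v)=d_G(v)-|N_P(v)|\le\Delta(G)-3$; hence $|L'(v)|>d_H(v)$ strictly. For $v\in V'\setminus V'_P$, $|L'(v)|=\Delta(G)\ge d_H(v)$. Since $G$ is connected and $P\ne\emptyset$, a simple path-tracing argument shows that every component $T$ of $H$ contains at least one vertex of $V'_P$ (the endpoint in $V'$ of any edge from $T$ to $P$). Therefore condition (a) of Theorem \ref{th:charac} fails on every component of $H$, so each is $L'$-colorable; combining $\varphi'$ with any such $L'$-coloring yields the desired proper $\Delta(G)$-coloring of $G$ that avoids $\varphi$.

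There is essentially no real obstacle beyond spotting the two-color trick: the hypothesis that each $V'_P$ vertex has three rather than merely two $P$-neighbors is calibrated exactly right so that pigeonhole on a palette of size two forces a color repetition, which in turn provides the strict slack needed to avoid the Gallai-tree obstruction in Theorem \ref{th:charac}.
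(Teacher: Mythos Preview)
Your proof is correct and follows essentially the same route as the paper: the paper derives Proposition~\ref{prop:ind} as the special case $d=3$ of Proposition~\ref{prop:prop}, whose proof colors $P$ avoiding $\varphi$ using only $d-1$ colors (here, two), observes that each vertex of $G-P$ with a $P$-neighbor then has strictly more list colors than its degree in $G-P$ by pigeonhole, and invokes Theorem~\ref{th:charac}. Your two-color trick and the strict-slack computation on $V'_P$ are exactly this argument specialized to $d=3$.
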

	This proposition is a special case of a more general result proved below,
	so we postpone its proof.
	Conversely, for the case when each vertex in $V(G) \setminus P$ has a bounded number of
	neighbors in $P$, then we have the following.
	
	\begin{proposition}
			Let $G$ be a connected graph with maximum degree $\Delta(G) \geq 4$ that is
			not complete,
			and $\varphi$ a precoloring of an independent subset $P \subseteq V(G)$.
			Furthermore, suppose that every vertex in $V(G) \setminus P$
			has at most $d_0$ neighbors in $P$, and that every vertex in
			$P$ has neighbors in at most $d_1$ components of $G-P$.
			If $d_0 d_1 < \Delta(G)-1$ and every component of $G-P$ contains
			a leaf block with at least $d_0+1$ distinct neighbors in $P$, then
			then there is a proper $\Delta(G)$-coloring of $G$ that avoids $\varphi$.
	\end{proposition}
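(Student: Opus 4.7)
I will convert the problem to list coloring and adapt the recoloring approach of Theorem \ref{thm:avoidprecol}. Set $L(v) = \{1,\ldots,\Delta(G)\} \setminus \{\varphi(v)\}$ for $v \in P$ and $L(v) = \{1,\ldots,\Delta(G)\}$ for $v \in V(G) \setminus P$; an $L$-coloring of $G$ is exactly the sought coloring avoiding $\varphi$. Since $P$ is independent and every list on $P$ has size $\Delta(G) - 1 \geq 3$, I start from any proper $L$-coloring $\varphi'$ of $P$. For $H = G - P$ with induced lists $L'(v) = L(v) \setminus \{\varphi'(u) : u \in P \cap N(v)\}$, the bound $|L'(v)| \geq \Delta(G) - d_0$ combined with Theorem \ref{th:charac} shows that any component of $H$ that is not $L'$-colorable---call it \emph{$\varphi'$-bad}---is a Gallai tree whose non-cut-vertices within each block $B$ share a common list $L_B$, with complement $C_B := \{1,\ldots,\Delta(G)\} \setminus L_B$ of size at most $d_0$ (each such non-cut-vertex has exactly $|C_B|$ distinctly-colored $P$-neighbors, and at most $d_0$ $P$-neighbors in total).

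For each bad component $T$ I use the hypothesis to pick a leaf block $B_T$ of $T$ with $|P_{B_T}| \geq d_0 + 1$. Every $p \in P_{B_T}$ satisfies $\varphi'(p) \in C_{B_T}$, and $|L(p)| = \Delta(G) - 1 > d_0 \geq |C_{B_T}|$ (using $d_0 d_1 < \Delta(G) - 1$ with $d_1 \geq 1$; the case $d_1 = 0$ is immediate since then $L' \equiv L$ on $V(H)$). The central local claim is that there are $p \in P_{B_T}$ and $c^* \in L(p) \setminus C_{B_T}$ such that resetting $\varphi'(p) := c^*$ makes $T$ no longer bad. Generically the $B_T$-neighbors of $p$ acquire lists that diverge from those of the other non-cut-vertices of $B_T$, violating condition (c) of Theorem \ref{th:charac}; in the degenerate case where $p$ is adjacent to every non-cut-vertex of $B_T$, I apply Corollary \ref{cor:leaf} with $c_1 = \varphi'(p)$ and two distinct candidates $c_2, c_3 \in L(p) \setminus C_{B_T}$ to guarantee that at least one of them works.

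To coordinate recolorings across several bad components without cycling I build a directed graph $D$ on the components of $H$ together with auxiliary sink vertices $x_p$: for each bad $T$ and each $p \in P_{B_T}$ I add either the arc $(t, x_p)$ (if $p$'s $H$-neighbors all lie in $T$) or arcs from $t$ to each other component of $H$ containing a $p$-neighbor. Each bad $T$ then has outdegree at least $|P_{B_T}| \geq d_0 + 1$, while each $p$ adjacent to $T$ contributes at most $d_1 - 1$ arcs into $T$; a weighted counting argument modeled on Claim \ref{cl:bad} and using $d_0 d_1 < \Delta(G) - 1$ bounds the indegree of every bad $T$ strictly below its outdegree. Lemma \ref{lem:digraph} then yields an acyclic subgraph $D'$ in which every bad $T$ has outdegree exactly $1$, and processing bad components in the topological order induced by $D'$, recoloring as dictated by the unique outgoing arc, produces an $L$-coloring of $G$ in finitely many steps. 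The main obstacle is precisely this last counting step, where $d_0 d_1 < \Delta(G) - 1$ is used tightly to balance the $d_0 + 1$ recoloring options per bad component against the at most $d_1 - 1$ side effects of each; a secondary subtlety is the potential loss of supervalence when $\varphi'(p)$ repeats on several $P$-neighbors of a common vertex, which must be tracked in the case analysis.
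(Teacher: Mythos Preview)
Your approach diverges from the paper's and has a real gap in the digraph step. The paper does not iterate or build a digraph on components of $H$ at all. Instead it constructs, in one shot, a \emph{conflict graph} $D$ with vertex set $P$: for each component of $H$, fix one leaf block $B$ with at least $d_0+1$ distinct neighbours in $P$, select $d_0+1$ of those neighbours, and make them a clique in $D$. Since each $p\in P$ has neighbours in at most $d_1$ components, it lies in at most $d_1$ such cliques, each contributing at most $d_0$ new neighbours; hence $\Delta(D)\le d_0d_1<\Delta(G)-1=|L(p)|$. A greedy (or list-Brooks) argument now gives a proper $L$-colouring $f$ of $P$ that is simultaneously a proper colouring of $D$. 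For the designated leaf block $B$ of any component $T$, the $d_0+1$ selected $P$-neighbours then receive $d_0+1$ pairwise distinct colours under $f$; but if $T$ were $f$-bad, every $P$-neighbour of $B$ would have its colour in the complement set $C_B$, which has size at most $d_0$. This contradiction shows no component is bad, and the proof ends---no recolouring, no Lemma~\ref{lem:digraph}, no Corollary~\ref{cor:leaf}.

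The gap in your plan is the claimed inequality $\text{outdeg}(t)>\text{indeg}(t)$ for every bad $t$ in your digraph. You bound the outdegree from below by $d_0+1$ and observe that each individual $p$ adjacent to $T$ contributes at most $d_1-1$ incoming arcs, but you never bound the number of such $p$. A bad Gallai tree $T$ can have arbitrarily many vertices, each with up to $d_0$ neighbours in $P$, so the total indegree of $t$ is not controlled by $d_0$, $d_1$, or $\Delta(G)$. The appeal to Claim~\ref{cl:bad} does not transfer: that claim relies on the distance-$4$ hypothesis of Theorem~\ref{thm:avoidprecol}(i), which guarantees that no two vertices of $P$ share a block of $H$, a structural restriction absent here. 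The inequality $d_0d_1<\Delta(G)-1$ is precisely what makes the paper's conflict-graph degree bound go through; it does not translate into the indegree/outdegree comparison your argument needs.
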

	\begin{proof}
		The proof is based on Theorem 
		\ref{th:charac} and uses similar ideas as the proof of Theorem \ref{thm:avoidprecol}, 
		so we only sketch the argument. The basic idea is to find a proper coloring $f$
		of the vertices of $P$ that avoids $\varphi$, and such that no component of 
		$H=G-P$ together with the list assignment
		for the vertices of $V(G) \setminus P$ obtained by setting
		$$L_f(v) = \{1,\dots, \Delta(G)\} \setminus \{f(u) : \text{$u$ is adjacent to $v$}\}$$
			for $v \in V(G) \setminus P$,
		satisfies the conditions in Theorem \ref{th:charac}. Consequently, there is
		a proper $\Delta(G)$-coloring of $G$ avoiding $\varphi$.
		
		If $T$ is a component of $H$ so that $V(T)$ is not $L_f$-colorable for
		some coloring $f$ of $P$, then every leaf-block of $T$ is a
		complete graph $K$ where all vertices have degree $\Delta(G)$ in $G$,
		and which satisfies that $L_f(u) = L_f(v)$ for any two vertices
		$u$ and $v$ of $K$ that are not cut-vertices. Moreover, the cut-vertex $w$
		of $K$ satisfies that $L_f(u) \subseteq L_f(w)$, where $u \in V(K)$.
		Hence, it suffices to prove that there is a coloring $f$ of $P$
		so that every component of $G-P$
		has a leaf block that does not satisfy this condition.
		
		To this end, we define a
	  ''conflict graph'' $D$ with vertex set $P$ 
		in the following way.
		Suppose that
		$K$ is a leaf block in $H$ that is isomorphic to $K_k$, for some integer $k$,
		and where the vertices in $K$ altogether have at least
		$d_0+1$ distinct neighbors in $P$. 
		We arbitrarily pick $d_0+1$ vertices in $P$, each of which has a neighbor in $K$,
		and form a clique on those $d_0+1$ vertices.
		
		Now, by assumption every component of $G-P$ contains such a leaf block as described
		in the preceding paragraph, so
		by repeating this process for one such leaf block of every component
		of $H$, we obtain the conflict graph $D$. The maximum degree of this graph
		is at most $d_1d_0$, since every vertex of $P$ has neighbors in at most
		$d_1$ components of $H$. Thus, by the list coloring version of
		Brooks' theorem, there is a proper $\Delta(G)$-coloring $f$ of $D$ that
		avoids $\varphi$.
		
		Now, by the construction of $D$ every component of $H$ has a 
		leaf block with 
		neighbors in $P$ of $d_0+1$
		different colors under $f$.
		Since every vertex of $V(G) \setminus P$ has at most $d_0$ neighbors
		in $P$, this implies that no component of $H$ together with the list assignment 
		$$L_f(v) = \{1,\dots, \Delta(G)\} \setminus \{f(u) : \text{$u$ is adjacent to $v$}\}$$
		satisfies the conditions in Theorem \ref{th:charac}
		and, consequently, there is a proper $\Delta(G)$-coloring of $G$ that avoids $\varphi$.
	\end{proof}

More generally, we might ask what can be said in the case when $P$ is not required
to be an independent set? A $\Delta(G)$-coloring of
$G$ where every vertex is assigned the same color is not avoidable if $\chi(G)= \Delta(G)$.
We note the following reformulation of a result for edge colorings from
\cite{CasselgrenJohanssonMarkstrom}. 

\begin{proposition}
\label{prop:gen}
	If $\varphi$ is a partial $k$-coloring of a connected graph $G$ that is not complete,
	where every color
	appears on at most $\Delta(G)-k$ vertices, then there is a proper $\Delta(G)$-coloring
	of $G$ that avoids $\varphi$.
\end{proposition}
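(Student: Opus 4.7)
The plan is to translate the problem into a list-coloring problem and then appeal to Theorem~\ref{th:charac}. For each vertex $v$ of $G$, set $L(v) = \{1,\dots,\Delta(G)\} \setminus \{\varphi(v)\}$ if $v$ is colored under $\varphi$, and $L(v) = \{1,\dots,\Delta(G)\}$ otherwise. A proper $\Delta(G)$-coloring of $G$ that avoids $\varphi$ is precisely an $L$-coloring of $G$. Since $|L(v)| \geq \Delta(G)-1$ throughout, $L$ is supervalent at every vertex of degree at most $\Delta(G)-1$, which puts us in position to use the block-Gallai-tree description of Theorem~\ref{th:charac}.

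In the easy subcase where every precolored vertex has degree at most $\Delta(G)-1$, the list assignment $L$ is supervalent everywhere and I would argue by contradiction. If $G$ is not $L$-colorable, Theorem~\ref{th:charac} forces $G$ to be a Gallai tree whose blocks $B$ are $|L_B|$-regular with $|L_B| = \chi(B)-1$, and with $L(v) = \bigcup_{B \in \mathcal{B}(v)} L_B$ at every vertex. Since $G$ is connected and not complete, this rigid structure contains a cut-vertex and at least two blocks. The constraint $L(v) = \{1,\dots,\Delta(G)\} \setminus \{\varphi(v)\}$ at each precolored $v$ forces all non-cut vertices of a leaf block to share the same missing color of $\varphi$. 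Counting precolored vertices with this common missing color in a single leaf block then contradicts the hypothesis that no color appears on more than $\Delta(G)-k$ vertices of $P$, once one observes there are $k\geq\chi(B)$ available colors.

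The main obstacle is the general case in which some precolored vertex $v$ has $d_G(v) = \Delta(G)$, so $|L(v)| = \Delta(G)-1 < d_G(v)$ and supervalence breaks. I would handle this by adapting the recoloring strategy from the proof of Theorem~\ref{thm:avoidprecol}: iteratively pick such a vertex, tentatively assign it a color different from $\varphi(v)$, and restrict to $G-v$ where supervalence is restored at the price of shrinking the neighbors' lists. Whenever this produces a $\varphi$-bad component, Corollary~\ref{cor:leaf} together with the multiplicity bound that each color is used on at most $\Delta(G)-k$ vertices of $P$ would supply a precolored vertex in the bad component that can be switched to a new color, strictly reducing the number of bad components; the multiplicity bound is exactly what guarantees the required slack. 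This is the technically delicate step, and everything else reduces to block-structure analysis via Theorem~\ref{th:charac}.
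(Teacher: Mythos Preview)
The paper omits the proof, referring to a virtually identical edge-coloring argument from \cite{CasselgrenJohanssonMarkstrom}. Transported to vertex coloring, that argument is elementary and does not touch Theorem~\ref{th:charac} at all: take a proper $\Delta(G)$-coloring of $G$ via Brooks' theorem, with color classes $V_1,\dots,V_{\Delta(G)}$, and seek a permutation $\sigma$ of $\{1,\dots,\Delta(G)\}$ such that assigning color $\sigma(i)$ to each $V_i$ avoids $\varphi$. This is a bipartite matching problem, and Hall's condition holds because each color $j\le k$ is forbidden from at most $\Delta(G)-k$ of the classes (namely those meeting $\varphi^{-1}(j)$, a set of size at most $\Delta(G)-k$), while colors $j>k$ are forbidden from none; a one-line split on whether a set $T$ of classes has $|T|\le\Delta(G)-k$ or $|T|>\Delta(G)-k$ then verifies Hall.

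Your route through Theorem~\ref{th:charac} is much heavier, and the ``general case'' has a genuine gap. In the easy subcase, the closing remark ``$k\ge\chi(B)$'' is backwards: the leaf block $B$ is forced to be $K_{\Delta(G)}$, so $\chi(B)=\Delta(G)\ge k$. The real contradiction there is that the $\Delta(G)-1$ non-cut vertices of $B$ share one $\varphi$-color, giving $\Delta(G)-1\le\Delta(G)-k$, hence $k=1$; a second leaf block (there are at least two) then supplies $2(\Delta(G)-1)>\Delta(G)-1$ vertices of that single color. More seriously, your plan for the general case does not work as stated. Deleting a full-degree precolored vertex $v$ and assigning it a color does \emph{not} restore supervalence at precolored neighbors of $v$: their lists drop to size $\Delta(G)-2$ while their degrees drop only to $\Delta(G)-1$. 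Corollary~\ref{cor:leaf} is tailored to the specific leaf-block geometry created by the distance hypotheses of Theorem~\ref{thm:avoidprecol}; here $P$ is arbitrary, nothing forces that geometry, and you give no mechanism by which the per-color multiplicity bound $\Delta(G)-k$ would make an iterative recoloring terminate. The Brooks\,+\,Hall argument sidesteps all of this.
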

The proof is virtually identical to the corresponding result for edge colorings
in \cite{CasselgrenJohanssonMarkstrom}, so we omit it.
Note, in particular, that Proposition \ref{prop:gen} implies that
every partial coloring with at most $\Delta(G)-1$ colored vertices is avoidable,
which is sharp by the example of the join of $K_{d}$ with two vertices
$u$ and $v$, where $u$ and all vertices of $K_d$ are colored $1$.

For the case when the vertices in $P$ induce a subgraph of small maximum degree,
we have the following.  
	
\begin{proposition}
\label{prop:prop}
			Let $G$ be a connected graph with maximum degree
			$\Delta(G) \geq 3$ that is not complete,
			and $\varphi$ a precoloring of a subset $P \subseteq V(G)$,
			where every vertex in $V(G) \setminus P$ either has at least $d$ neighbors
			in $P$ or no neighbors in $P$.
			If $\Delta(G[P]) < d-2$, then there is a proper coloring of $G$
			avoiding $\varphi$.
\end{proposition}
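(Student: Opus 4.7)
My plan is to find a coloring $f$ of $P$ avoiding $\varphi$ whose induced list assignment on $H = G - P$ falsifies the hypotheses of Theorem~\ref{th:charac} on every component, so that each component of $H$ is $L_f$-colorable. The central trick is to color $P$ using only colors from the restricted palette $\{1,\ldots,d-1\}$; then, since every vertex of $V(G) \setminus P$ with a neighbor in $P$ has at least $d$ such neighbors by hypothesis, pigeonhole automatically produces a color repetition in $N(v) \cap P$, and the resulting list $L_f(v)$ is strictly larger than $d_H(v)$.

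The case $P = \emptyset$ is Brooks' theorem, so I would assume $P \neq \emptyset$. Since $G$ is connected, this also forces some vertex of $V(G) \setminus P$ to have a neighbor in $P$, giving $d \leq \Delta(G)$ so that the palette $\{1,\ldots,d-1\}$ is a subset of the available colors $\{1,\ldots,\Delta(G)\}$. Note also that $\Delta(G[P]) < d - 2$ forces $d \geq 3$. The first step would then be to color $G[P]$ greedily from the lists $L(u) = \{1,\ldots,d-1\} \setminus \{\varphi(u)\}$, which have size at least $d - 2 \geq \Delta(G[P]) + 1$; this yields a proper coloring $f$ of $G[P]$ with $f(u) \neq \varphi(u)$ for every $u \in P$.

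For the extension step, I would set $L_f(v) = \{1,\ldots,\Delta(G)\} \setminus \{f(u) : u \in P,\ uv \in E(G)\}$ for $v \in V(H)$ and show each component $T$ of $H$ is $L_f$-colorable. Connectivity of $G$ together with $P \neq \emptyset$ guarantees some $v^\star \in V(T)$ has a neighbor in $P$, hence $|N(v^\star) \cap P| \geq d$ by hypothesis. Since these neighbors are colored by $f$ with values from a palette of size only $d-1$, at most $d-1$ distinct colors are removed from $\{1,\ldots,\Delta(G)\}$ in forming $L_f(v^\star)$, giving
$$|L_f(v^\star)| \geq \Delta(G) - (d-1) > \Delta(G) - |N(v^\star) \cap P| \geq d_G(v^\star) - |N(v^\star) \cap P| = d_T(v^\star).$$
A routine check shows $|L_f(v)| \geq d_T(v)$ for every other $v \in V(T)$, so $L_f$ is supervalent on $T$ yet fails condition~(a) of Theorem~\ref{th:charac} at $v^\star$. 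Hence $T$ is $L_f$-colorable, and stitching the colorings of the components of $H$ together with $f$ on $P$ yields the required proper $\Delta(G)$-coloring of $G$ that avoids $\varphi$.

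I do not foresee any serious obstacle: the argument reduces to a clean application of Theorem~\ref{th:charac} combined with pigeonhole, and is noticeably shorter than the conflict-graph construction used in the preceding proposition because the gap between $d$ and $\Delta(G[P])$ automatically forces the collision in $N(v^\star) \cap P$. The only points requiring care are the list-size check in the greedy coloring of $G[P]$ and the containment $\{1,\ldots,d-1\} \subseteq \{1,\ldots,\Delta(G)\}$, both of which follow immediately from the hypotheses together with the connectivity of $G$.
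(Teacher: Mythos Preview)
Your proof is correct and follows essentially the same route as the paper: color $G[P]$ properly from the palette $\{1,\dots,d-1\}$ while avoiding $\varphi$ (using $\Delta(G[P]) < d-2$), then observe via pigeonhole that the induced list assignment $L_f$ on each component of $G-P$ is supervalent and violates condition~(a) of Theorem~\ref{th:charac}. The paper's version is simply terser, leaving the pigeonhole step, the verification that $d \le \Delta(G)$, and the case $P=\emptyset$ implicit.
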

\begin{proof}
	Again, the argument is similar to the one in the proof
	of Theorem \ref{thm:avoidprecol},
	so we only sketch the proof.
	
	Since $\Delta(G[P]) < d-2$, there is a proper $(d-1)$-coloring $f$ of
	$G[P]$ that avoids $\varphi$. 
	Now, consider the lists induced 
		for the vertices of $V(G) \setminus P$ obtained by setting
	$$L_f(v) = \{1,\dots, \Delta(G)\} \setminus \{f(u) : \text{$u$ is adjacent to $v$}\}$$
	for $v \in V(G) \setminus P$.
	
	Since every vertex of $V(G) \setminus P$
	has at least $d$ neighbors in $P$ or no neighbor in $P$, no component $T$ of the graph $G-P$ 
	along with the restriction of  $L_f$ to $T$
	satisfies the conditions in Theorem \ref{th:charac}. Hence, there is an $L_f$-coloring
	of $G-P$. This $L$-coloring taken together with $f$ is the required coloring.
	\end{proof}
Note that Proposition \ref{prop:ind} is a special 
case of Proposition \ref{prop:prop}.

\bigskip

Next, we consider the special case that the partial coloring to be avoided is proper.
It is straightforward that for the case $\Delta(G) \leq 2$, there
are unavoidable partial proper colorings of even cycles and paths, while any partial
proper $3$-coloring of an odd cycle is avoidable.
For the case $\Delta(G)=3$, the examples after Theorem \ref{thm:avoidprecol}
show that there are unavoidable partial proper $3$-colorings of graphs with maximum degree three.
For the case $\Delta(G) =4$, Figure \ref{fig:unavoidable}
depicts a
graph with an unavoidable partial proper coloring. For $\Delta(G) \geq 5$
we do not know any such examples, and in general we would like to suggest
the following.

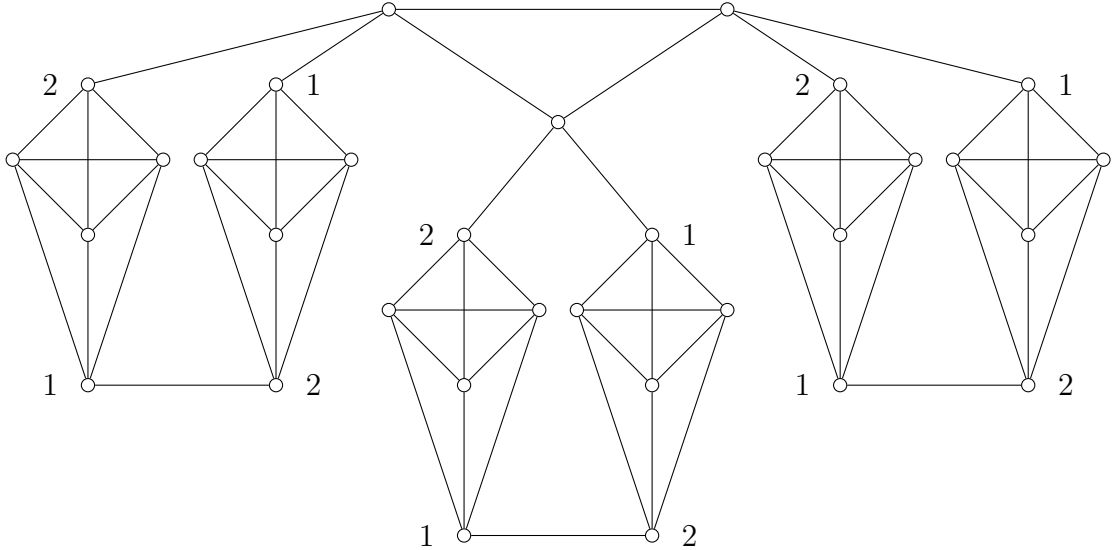
\begin{figure} [h]
	\begin{center}

	\begin{tikzpicture}[scale=0.5]
	\tikzset{vertex/.style = {shape=circle,draw,
												inner sep=0pt, minimum width=5pt}}
	\tikzset{edge/.style = {-,> = latex'}}

	\node[vertex] (u_1) at  (0, -4) {};
	\node at (-1,-4) {$1$};
		\node[vertex] (u_2) at  (5, -4) {};
\node at (6,-4) {$2$};
	
	\node[vertex] (x_1) at  (0, 0) {};
	\node[vertex] (x_2) at  (-2, 2) {};
	\node[vertex] (x_3) at  (2, 2) {};
	
	\node[vertex] (y_1) at  (5, 0) {};
	\node[vertex] (y_2) at  (3, 2) {};
	\node[vertex] (y_3) at  (7, 2) {};

	\node[vertex] (z_1) at  (0, 4) {};
	\node at (-1,4) {$2$};
	\node[vertex] (z_2) at  (5, 4) {};
	\node at (6,4) {$1$};
	
		\node[vertex] (w_1) at  (2.5, 7) {};

			\draw[edge] (u_1) to node [] {}  (u_2);

	\draw[edge] (u_1) to node [] {}  (x_1);	
	\draw[edge] (u_1) to node [] {}  (x_2);	
	\draw[edge] (u_1) to node [] {}  (x_3);

	\draw[edge] (x_1) to node [] {}  (x_2);	
	\draw[edge] (x_3) to node [] {}  (x_2);	
	\draw[edge] (x_1) to node [] {}  (x_3);

	\draw[edge] (y_1) to node [] {}  (y_2);	
	\draw[edge] (y_3) to node [] {}  (y_2);	
	\draw[edge] (y_1) to node [] {}  (y_3);

	\draw[edge] (u_2) to node [] {}  (y_1);	
	\draw[edge] (u_2) to node [] {}  (y_2);	
	\draw[edge] (u_2) to node [] {}  (y_3);

	\draw[edge] (z_1) to node [] {}  (x_1);	
	\draw[edge] (z_1) to node [] {}  (x_2);	
	\draw[edge] (z_1) to node [] {}  (x_3);
	
	\draw[edge] (z_2) to node [] {}  (y_1);	
	\draw[edge] (z_2) to node [] {}  (y_2);	
	\draw[edge] (z_2) to node [] {}  (y_3);
	
	\draw[edge] (z_1) to node [] {}  (w_1);
	\draw[edge] (z_2) to node [] {}  (w_1);
%
%
	\node[vertex] (u_11) at  (-10, 0) {};
	\node at (-11,0) {$1$};
		\node[vertex] (u_12) at  (-5, 0) {};
\node at (-4,0) {$2$};
	
	\node[vertex] (x_11) at  (-10, 4) {};
	\node[vertex] (x_12) at  (-12, 6) {};
	\node[vertex] (x_13) at  (-8, 6) {};
	
	\node[vertex] (y_11) at  (-5, 4) {};
	\node[vertex] (y_12) at  (-7, 6) {};
	\node[vertex] (y_13) at  (-3, 6) {};

	\node[vertex] (z_11) at  (-10, 8) {};
	\node at (-11,8) {$2$};
	\node[vertex] (z_12) at  (-5, 8) {};
	\node at (-4,8) {$1$};
	
		\node[vertex] (w_2) at  (-2, 10) {};

			\draw[edge] (u_11) to node [] {}  (u_12);

	\draw[edge] (u_11) to node [] {}  (x_11);	
	\draw[edge] (u_11) to node [] {}  (x_12);	
	\draw[edge] (u_11) to node [] {}  (x_13);

	\draw[edge] (u_12) to node [] {}  (y_11);	
	\draw[edge] (u_12) to node [] {}  (y_12);	
	\draw[edge] (u_12) to node [] {}  (y_13);

	\draw[edge] (x_11) to node [] {}  (x_12);	
	\draw[edge] (x_13) to node [] {}  (x_12);	
	\draw[edge] (x_11) to node [] {}  (x_13);

	\draw[edge] (y_11) to node [] {}  (y_12);	
	\draw[edge] (y_13) to node [] {}  (y_12);	
	\draw[edge] (y_11) to node [] {}  (y_13);
	
	\draw[edge] (z_11) to node [] {}  (x_11);	
	\draw[edge] (z_11) to node [] {}  (x_12);	
	\draw[edge] (z_11) to node [] {}  (x_13);
	
	\draw[edge] (z_12) to node [] {}  (y_11);	
	\draw[edge] (z_12) to node [] {}  (y_12);	
	\draw[edge] (z_12) to node [] {}  (y_13);
	
	\draw[edge] (z_11) to node [] {}  (w_2);
	\draw[edge] (z_12) to node [] {}  (w_2);
%
%
	\node[vertex] (u_21) at  (10, 0) {};
	\node at (9,0) {$1$};
		\node[vertex] (u_22) at  (15, 0) {};
\node at (16,0) {$2$};
	
	\node[vertex] (x_21) at  (10, 4) {};
	\node[vertex] (x_22) at  (8, 6) {};
	\node[vertex] (x_23) at  (12, 6) {};
	
	\node[vertex] (y_21) at  (15, 4) {};
	\node[vertex] (y_22) at  (13, 6) {};
	\node[vertex] (y_23) at  (17, 6) {};

	\node[vertex] (z_21) at  (10, 8) {};
	\node at (9,8) {$2$};
	\node[vertex] (z_22) at  (15, 8) {};
	\node at (16,8) {$1$};
	
		\node[vertex] (w_3) at  (7, 10) {};

			\draw[edge] (u_21) to node [] {}  (u_22);

	\draw[edge] (u_21) to node [] {}  (x_21);	
	\draw[edge] (u_21) to node [] {}  (x_22);	
	\draw[edge] (u_21) to node [] {}  (x_23);

	\draw[edge] (u_22) to node [] {}  (y_21);	
	\draw[edge] (u_22) to node [] {}  (y_22);	
	\draw[edge] (u_22) to node [] {}  (y_23);

	\draw[edge] (x_21) to node [] {}  (x_22);	
	\draw[edge] (x_23) to node [] {}  (x_22);	
	\draw[edge] (x_21) to node [] {}  (x_23);

	\draw[edge] (y_21) to node [] {}  (y_22);	
	\draw[edge] (y_23) to node [] {}  (y_22);	
	\draw[edge] (y_21) to node [] {}  (y_23);
	
	\draw[edge] (z_21) to node [] {}  (x_21);	
	\draw[edge] (z_21) to node [] {}  (x_22);	
	\draw[edge] (z_21) to node [] {}  (x_23);
	
	\draw[edge] (z_22) to node [] {}  (y_21);	
	\draw[edge] (z_22) to node [] {}  (y_22);	
	\draw[edge] (z_22) to node [] {}  (y_23);
	
	\draw[edge] (z_21) to node [] {}  (w_3);
	\draw[edge] (z_22) to node [] {}  (w_3);
	
	\draw[edge] (w_1) to node [] {}  (w_3);
	\draw[edge] (w_1) to node [] {}  (w_2);
	\draw[edge] (w_2) to node [] {}  (w_3);

\end{tikzpicture}
\end{center}
\caption{A graph with a partial proper coloring that is not avoidable.}
	\label{fig:unavoidable}
\end{figure}

\begin{problem}
	\label{prob:avoid}
	Is there a constant $\Delta_0$ such that if $\Delta(G) \geq \Delta_0$
	and $G$ does not contain a copy of $K_{\Delta(G)+1}$,
	then any partial proper $\Delta(G)$-coloring of $G$ is avoidable?
\end{problem}

	We note that every proper coloring of a graph is avoidable by permuting colors;
	thus any extendable partial coloring is avoidable.

	\bigskip
		
Finally, let us briefly consider corresponding questions for $k$-colorings of
$k$-chromatic graphs; that is, if $G$ is a $k$-colorable graph
and every vertex is assigned a forbidden color, can we find a proper coloring
that avoids the forbidden colors?

Since for any $k$, there are well-known examples of uniquely $k$-colorable
graphs, we need at least $k+1$ colors for such a coloring.
Moreover, since there are examples of planar graphs
with $4$-list assignments with colors from the set $\{1,\dots, 5\}$,
that are not list colorable (see e.g. \cite{Albertson}), 
it is in general not possible to avoid a
$(k+1)$-coloring of a $k$-colorable graph using $k+1$ colors. 
Furthermore, in the example from
\cite{Albertson},
the missing colors in fact form a proper coloring of the planar graph,
so it follows that not every proper $5$-coloring of a planar graph is avoidable.

On the other hand, if we require that the set of vertices
with a forbidden color form an independent set, then any partial $(k+1)$-coloring
of a $k$-chromatic graph is trivially avoidable.

\begin{proposition}
	If $G$ is a $k$-colorable graph, and $\varphi$ a partial $(k+1)$-coloring
	of an independent set in $G$, then there is a proper $(k+1)$-coloring
	of $G$ avoiding $\varphi$.
\end{proposition}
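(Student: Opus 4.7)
The plan is to start from any proper $k$-coloring $c \colon V(G) \to \{1,\dots,k\}$ of $G$, which exists because $G$ is $k$-colorable, and then use the extra color $k+1$ solely to correct the vertices of $P$ on which $c$ happens to agree with $\varphi$.

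Concretely, I would define $f \colon V(G) \to \{1,\dots,k+1\}$ by setting $f(v) = k+1$ whenever $v \in P$ and $c(v) = \varphi(v)$, and $f(v) = c(v)$ otherwise. Checking that $f$ is a proper coloring uses only the hypothesis that $P$ is independent: two vertices both reassigned to $k+1$ both lie in $P$ and hence are non-adjacent, while for an edge $uv$ with $u \in P$ reassigned and $v$ not reassigned we have $v \notin P$, so $f(v) = c(v) \in \{1,\dots,k\}$, which does not conflict with $f(u) = k+1$. Edges with no reassigned endpoint are properly colored simply because $c$ is proper.

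To see that $f$ avoids $\varphi$, let $v \in P$. If $c(v) \neq \varphi(v)$ then $f(v) = c(v) \neq \varphi(v)$, while if $c(v) = \varphi(v)$ then the common value lies in $\{1,\dots,k\}$ (since $c$ uses only these colors), so $f(v) = k+1 \neq \varphi(v)$. In either case $f$ disagrees with $\varphi$ on $v$.

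There is essentially no obstacle here: the whole argument is a one-line recoloring trick exploiting the independence of $P$ together with the availability of a single extra color that does not appear anywhere in the initial $k$-coloring. This matches the author's own description of the proposition as \emph{trivially avoidable} under the stated independence hypothesis.
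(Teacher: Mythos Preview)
Your proof is correct and follows exactly the same idea as the paper's: take a proper $k$-coloring and recolor with $k+1$ precisely those vertices of $P$ on which it agrees with $\varphi$. You simply supply more detail in the verification than the paper does.
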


\begin{proof}
	Let $f$ be any proper $k$-coloring of $G$. For any vertex $v$
	that has the same color under $f$ and $\varphi$ we recolor
	$v$ by the color $k+1$; the resulting coloring is proper and avoids $\varphi$.
\end{proof}

\end{document}